\DeclareMathOperator{\divv}{\mathrm{div}}
\newcommand{\inner}[2]{(#1, #2 )}
\newcommand{\R}{\mathbb{R}}
\newcommand{\ssum}{\textstyle \sum}
\newcommand{\dx}{\, \textrm{d} x}
\newcommand{\ds}{\, \textrm{d} s}
\def\ep{\bm \varepsilon}
\def\sig{\bm \sigma}
\def\bU{\boldsymbol U}
\def\bV{\boldsymbol V}
\def\bP{\boldsymbol P}
\def\bff{\boldsymbol f}
\def\bu{\boldsymbol u}
\def\bv{\boldsymbol v}
\def\bp{\boldsymbol p}
\def\br{\boldsymbol r}
\def\bg{\boldsymbol g}
\def\bw{\boldsymbol w}
\def\bz{\boldsymbol z}
\def\bq{\boldsymbol q}
\def\bn{\boldsymbol n}
\def\balpha{\boldsymbol \alpha}
\def\Divv{\text{Div\,}}
\def\C{\Lambda}
\renewcommand{\div}{\operatorname{div}}
\definecolor{viol}{rgb}{0.75,0.15,0.95}
\definecolor{bluegreen}{rgb}{0.55,0.15,0.95}
\definecolor{bluegreen}{rgb}{0.25,0.68,0.78}
\definecolor{orange}{rgb}{0.0,0.0,0.0}
\crefname{hypothesis}{Hypothesis}{Hypotheses}
\newcommand{\kam}[1]{\noindent \textcolor{blue}{#1}}
\title{Robust approximation of generalized Biot-Brinkman problems
}
\author{Qingguo Hong\thanks{Department of Mathematics, Pennsylvania State University 
  (\email{huq11@psu.edu}).}
\and Johannes Kraus\thanks{Faculty of Mathematics, University of Duisburg-Essen 
  (\email{johannes.kraus@uni-due.de}).}
\and Miroslav Kuchta\thanks{Department of Numerical Analysis and Scientific Computing, Simula Research Labaratory
  (\email{miroslav@simula.no}).}
\and Maria Lymbery\thanks{Faculty of Mathematics, University of Duisburg-Essen 
  (\email{maria.lymbery@uni-due.de}).}
\and Kent-Andr\'e Mardal\thanks{Department of Mathematics, University of Oslo and Department for Numerical Analysis and Scientific Computing, Simula Research Laboratory 
  (\email{kent-and@simula.no}).}
\and Marie E. Rognes\thanks{Department of Numerical Analysis and Scientific Computing, Simula Research Labaratory and Department of Mathematics, University of Bergen 
  (\email{meg@simula.no}).}
}
\DeclareMathOperator{\diag}{diag}
\begin{document}

\maketitle

\begin{abstract}
  The generalized Biot-Brinkman equations describe the displacement,
  pressures and fluxes in an elastic medium permeated by multiple
  viscous fluid networks and can be used to study complex
  poromechanical interactions in geophysics, biophysics and other
  engineering sciences. These equations extend on the Biot and
  multiple-network poroelasticity equations on the one hand and
  Brinkman flow models on the other hand, and as such embody a range
  of singular perturbation problems in realistic parameter regimes. In
  this paper, we introduce, theoretically analyze and numerically
  investigate a class of three-field finite element formulations of
  the generalized Biot-Brinkman equations. By introducing appropriate
  norms, we demonstrate that the proposed finite element
  discretization, as well as an associated preconditioning strategy,
  is robust with respect to the relevant parameter regimes. The
  theoretical analysis is complemented by numerical examples.
\end{abstract}

\begin{keywords}
  poromechanics, finite element method, preconditioning, Biot
  equations, Brinkman approximation, multiple-network poroelasticity
\end{keywords}

%

\section{Introduction}
\label{sec:introduction}

The study of the mechanical response of fluid-filled porous media --
\emph{poromechanics} -- is essential in geophysics, biophysics and
civil engineering. Through a series of seminal works dating from 1941
and onwards~\cite{Biot1941general, Biot1955theory}, Biot introduced
governing equations for the dynamic behavior of a linearly elastic
solid matrix permeated by a viscous fluid with flow through the pore
network described by Darcy's law~\cite{darcy1856fontaines,
  whitaker1986flow}. Double-porosity models, extending upon Biot's
single fluid network to the case of two interacting networks, were
used to describe the motion of liquids in fissured rocks as early as
in the 1960s~\cite{Barenblatt1960basic, wilson1982theory,
  khaled1984theory}. Later, multiple-network poroelasticity equations
emerged in the context of reservoir
modelling~\cite{bai1993multiporosity} to describe elastic media
permeated by multiple networks characterised by different porosities,
permeabilities and/or interactions. Since the early 2000s,
poromechanics has been applied to model the
heart~\cite{nash2000computational, chabiniok2016multiphysics} as well
as the brain and central nervous
system~\cite{TullyVentikos2011cerebral, stoverud2016poro,
  Vardakis2016investigating, Chou2016afully,
  Guo_etal2018subject-specific}.

In addition to interactions between fluid networks, recently also the
viscous forces acting within each network have come to the
fore~\cite{barnafi2021mathematical, chapelle2014general,
  burtschell2019numerical, kedarasetti2021arterial}. At its core, the
effect of viscosity can be accounted for by replacing the Darcy
approximation in the poroelasticity model by a Brinkman
approximation~\cite{brinkman1949calculation,
  rajagopal2007hierarchy}. We here introduce multiple-network
poroelasticity models incorporating viscosity under the term
\emph{generalized Biot-Brinkman equations}. In a bounded domain
$\Omega \subset \mathbb R^d$, $d=1, 2, 3$ comprising $n$ fluid
networks, the generalized Biot-Brinkman equations read as follows:
find the displacement $\bu = \bu(x, t)$, fluid fluxes $\bv_i =
\bv_i(x, t)$ and corresponding (negative) fluid pressures $p_i =
p_i(x, t)$, for $i=1, \dots, n$ satisfying
\begin{subequations}
  \label{eq:mBB:t}
  \begin{align}
    -\divv \left (\sig(\bu) + \balpha \cdot \bp \bm I \right )  &= \bff, \label{eq:mBB:t1} \\
    -\nu_i\divv \ep(\bv_i) + \bv_i - K_i \nabla p_i&= \bm r_i,  \label{eq:mBB:t2} \\
    - c_{i} \dot{p}_{i} - \bar \beta_i p_i + \alpha_i \divv \dot{\bu} + \divv \bv_i  
    + \bm \beta_i \cdot \bp &= g_i, 
    \label{eq:mBB:t3}
  \end{align}
\end{subequations}
over $\Omega \times (0, T)$ for $T > 0$, and where~\eqref{eq:mBB:t2}
and~\eqref{eq:mBB:t3} hold for $i = 1,
\ldots,n$. In~\eqref{eq:mBB:t1}, we have introduced the vector
notation $\bp = (p_1, \dots, p_n)$ and $\balpha = (\alpha_1, \dots,
\alpha_n)$, where $\alpha_i$ is the Biot-Willis coefficient associated
with network $i$. The elastic stress and strain tensors are:
\begin{equation}
  \sig(\bu) = 2\mu \ep(\bu) + \lambda \text{div}(\bu)\bm I, \quad \ep(\bu)  = \frac{1}{2}(\nabla \bu + (\nabla \bu)^T),
\end{equation}
respectively, and with Lam\'e parameters $\mu$ and
$\lambda$. Moreover, for each fluid network $i$, $\nu_i$ denotes the
fluid viscosity and $K_i$ is its hydraulic conductance
tensor. Furthermore, \eqref{eq:mBB:t3} is an equivalent formulation of
the standard multiple-network poroelasticity mass balance
equations~\cite{bai1993multiporosity, lee2019mixed,
  hong2019conservative} with transfer coefficients $\beta_{ij}$,
denoting $\bm \beta_i = (\beta_{i1}, \dots, \beta_{in})$ and $\bar
\beta_i = \sum_{j} \beta_{ij}$, when the fluid transfer into
network $i$ is given by
\begin{equation*}
  \ssum_{j = 1, j \not = i}^n \beta_{ij}( p_i - p_j).
\end{equation*}
The constants $c_i$ in~\eqref{eq:mBB:t3} denote the constrained
specific storage coefficients, see
e.g.~\cite{Showalter2010poroelastic} and the references
therein. Finally, the prescribed right hand side $\bff$ denotes body
forces, while $g_i$ denotes a fluid source and $\bm r_i$ represents an
external flux, both of the two latter in each network $i$. In the case
$n = 1$ and $\nu = 0$,~\eqref{eq:mBB:t} reduces to the Biot equations.

The generalized Biot-Brinkman problem~\eqref{eq:mBB:t} defines a
challenging system of PDEs to solve numerically. One reason for this
is the large number of material parameters, several of which give
rise to singular perturbation problems such as in the extreme cases of
(near) incompressibility ($\lambda \rightarrow \infty$) and
impermeability ($K_i \rightarrow 0$). Specifically, $\lambda \gg \mu$
is associated with numerical locking; if~\eqref{eq:mBB:t} is scaled by
$1/\lambda$, the elastic term of the equation reads $\divv \frac{2
  \mu}{\lambda} \ep(\bu) + \nabla \divv \bu = f$ which transforms from
an $H^1$ problem to an $H(\divv)$ problem as $\lambda$ tends to
infinity. Similar singular perturbation problems arise, now for the
flux variable $\bv_i$, as $\nu_i$ tends to zero. Furthermore, certain
parameter ranges of the storage coefficients and permeabilities ($K_i
\ll c_i)$ give rise to singular perturbation problems in the Darcy
sub-system, see e.g.~\cite{mardal2021accurate} and references
therein. Finally, we mention that large transfer coefficients $\beta_{ij}$
and/or small Biot-Willis coefficients $\alpha_i$ can lead to
strong coupling of the different subsystems and prevent direct
exploitation of each subsystem's properties.

In the case of vanishing viscosities ($\nu_i = 0, \forall i$) the
system~\eqref{eq:mBB:t} reduces to the multiple-network poroelasticity
(MPET) equations. Robust and conservative numerical approximations of
the MPET equations have been studied in the context of (near)
incompressibility~\cite{lee2019mixed} as well as other material
parameters~\cite{hong2019conservative, hong2020parameterM,
  hong2021framework}. Parameter-independent preconditioning and splitting
schemes as well as a-posteriori error analysis and adaptivity have also been
identified for the MPET
equations~\cite{hong2020parameter, hong2020parameterM, piersanti2021parameter,
eliseussen2021posteriori}. However, the generalized
Biot-Brinkman system has received little attention from the numerical
community. Therefore, the purpose of this paper is to identify and
analyze stable finite element approximation schemes and
preconditioning techniques for the time-discrete generalized
Biot-Brinkman systems, with particular focus on parameter robustness.

This paper is organized as follows. After introducing notation,
context and preliminaries in~\Cref{sec:preliminaries}, we prove that
the time-discrete generalized Biot-Brinkman system is well-posed in
appropriate function spaces in~\Cref{sec:wellposed}. We introduce a
fully discrete generalized Biot-Brinkman problem in
\Cref{sec:uni_stab_disc_model} and prove that the discrete
approximations satisfy a near optimal a-priori error estimate in
appropriate norms independently of material parameters. We also
propose a natural preconditioner. The theoretical analysis is
complemented by numerical experiments in \Cref{sec:numerics}.

\section{Preliminaries and notation}
\label{sec:preliminaries}

In this section of preliminaries, we give assumptions on the material
parameters, present a rescaling of a time-discrete generalized
Biot-Brinkman system and introduce parameter-weighted norms and
function spaces.

\subsection{Material parameters}

We assume that the elastic Lam\'e coefficients satisfy the standard
conditions $\mu > 0$ and $d \lambda + 2 \mu > 0$. The transfer
coefficients are such that $\beta_{ij} = \beta_{ji} \geq 0$ for $i
\not = j$ while $\beta_{ii} = 0$, and the specific storage
coefficients $c_i \geq 0$ for $i = 1, \dots, n$. The Biot-Willis
coefficients are bounded between zero and one by construction: $0 <
\alpha_i \leq 1$. We also assume that the hydraulic conductances $K_i
> 0$ for $i = 1, \dots, n$.  
Further, our focus will be on the case $\nu_i> 0$.
For spatially-varying material parameters,
we assume that each of the above conditions holds point-wise and that
each parameter field is uniformly bounded from above and below.

\subsection{Time discretization, rescaling and structure}


Taking an implicit Euler time-discretization of~\eqref{eq:mBB:t} with
uniform timestep $\tau$, multiplying~\eqref{eq:mBB:t3} by $\tau$,
rearranging terms and removing the time-dependence from the notation,
we obtain the following problem structure to be solved over $\Omega$
at each time step: find the unknown displacement $\bu = \bu(x)$, fluid
fluxes $\bv_i = \bv_i(x)$ and corresponding (negative) fluid pressures
$p_i = p_i(x)$, for $i=1, \dots, n$ satisfying
\begin{subequations}
  \label{eq:brinkman_system}
  \begin{align*}
    -\divv \left (\sig(\bu) + \balpha \cdot \bp \bm I \right )  &= \bff,  \\
    - \nu_i \divv \ep(\bv_i) + \bv_i - K_i \nabla p_i &= \bm r_i,   \\
    - \left( c_{i} + \tau \bar \beta_i \right ) p_{i} + \alpha_i \divv \bu + \tau \divv \bv_i + \tau \bm \beta_i \cdot \bp &= \tau g_i. 
  \end{align*}
\end{subequations}
Multiplying by $\tau K_i^{-1}$ in the second equation(s) for the sake of symmetry gives
\begin{subequations}
  \label{eq:brinkman_system_scaled}
  \begin{align}
    - \divv \left (\sig(\bu) + \balpha \cdot \bp \bm I \right )  &= \bff,  \\
    - \nu_i \tau K_i^{-1}\divv \ep(\bv_i) +\tau K_i^{-1} \bv_i - \tau\nabla p_i &=\tau K_i^{-1} \bm r_i,   \\
    - \left( c_{i} + \tau \bar \beta_i \right ) p_{i} + \alpha_i \divv \bu + \tau \divv \bv_i + \tau \bm \beta_i \cdot \bp &= \tau g_i. 
  \end{align}
\end{subequations}

For the sake of readability, we define
\begin{equation}\label{eq:scale_params}
  s_i := c_i + \tau \bar \beta_i, \quad
  \gamma_i := \tau \nu_i K_i^{-1}
\end{equation}
recalling that $\bar \beta_i = \sum_{j} \beta_{ij}$ and $\beta_{ii} =
0$, and set
\begin{equation}
  R^{-1} := \max \{ (1 + \nu_1) \tau K_1^{-1}, \dots, (1 + \nu_n) \tau K_n^{-1} \}.
\end{equation}
Using this notation, we introduce four $n \times n$ parameter matrices
\begin{equation}
      \Lambda_1 = - \tau 
      \begin{pmatrix}
        0 & \beta_{12} & \dots & \beta_{1n}  \\
        \beta_{21} & 0 & \dots & \beta_{2n}  \\
        \vdots & \vdots & \ddots & \vdots  \\
        \beta_{n1} & \beta_{n2} & \dots & 0
      \end{pmatrix} \\
\end{equation}
and
\begin{equation}
  \Lambda_2 = \diag(s_1, s_2, \dots, s_n), \quad
  \Lambda_3 = \tau^{2} R I, \quad
  \Lambda_4 = \frac{1}{2\mu+\lambda} \boldsymbol{\alpha} \boldsymbol{\alpha}^T,
\end{equation}
before defining
\begin{equation}
  \Lambda = \sum_{i=1}^4 \Lambda_i .
\end{equation}
In the case $n = 1$, dropping the subscripts $i, j$ for readability
and with the newly introduced parameter notation, the operator
structure of the rescaled system~\eqref{eq:brinkman_system_scaled} is
\begin{equation}
  \begin{pmatrix}
    - \divv \sig & \bm 0 & - \alpha \nabla \\
    \bm 0 & - \gamma \divv \ep + \tau K^{-1} \bm I & - \tau  \nabla \\
    \alpha\divv & \tau\divv & - (\Lambda_1 + \Lambda_2)  \\
  \end{pmatrix}
  \begin{pmatrix}
    \bm u \\
    \bm v \\
    \bm p
  \end{pmatrix}
  =
  \begin{pmatrix}
    \bm f \\
    \bm r \\
    \bm g
  \end{pmatrix}
  \label{eq:bb:matrix}
\end{equation}
for $- (\Lambda_1 + \Lambda_2) =  c \bm I$, and $\bm p = p$ in the $n = 1$ case. The
same structure holds for $n > 2$ when denoting $\bm v^T = (\bm v_1^T, \bm
v_2^T, \dots, \bm v_n^T)$, $({\rm Div} \bm v)^T = ( \divv \bv_1, \dots, \divv
\bv_n)$.

By the assumption of symmetric transfer, i.e.~$\beta_{ij} =
\beta_{ji}$, $\Lambda_1$ and $\Lambda$ are symmetric. Moreover, as
$\Lambda_1 + \Lambda_2$ is weakly diagonally dominant and thus
symmetric positive semi-definite, $\Lambda_3$ is symmetric positive
definite, and $\Lambda_4$ is symmetric positive semi-definite, it
follows that $\Lambda$ is symmetric positive definite.

\subsection{Domain and boundary conditions}

Assume that $\Omega$ is open and bounded in $\R^d$, $d=2, 3$ with
Lipschitz boundary $\partial \Omega$. We consider the following
idealized boundary conditions for the theoretical analysis of the
time-discrete generalized Biot-Brinkman system~\eqref{eq:bb:matrix}
over $\Omega$. We assume that the displacement is prescribed (and
equal to zero for simplicity) on the entire boundary $\partial
\Omega$. Furthermore for each of the flux momentum equations we assume
datum on the normal flux $\bv_i \cdot \bn$ \emph{and} the tangential
part of the traction associated with the viscous term $\ep(\bv_i)
\cdot \bn$.  Combined, we thus set
\begin{equation}
  \begin{aligned}
    \label{eq:bcs}
    \bu(\bm x) &= {\bm 0} \quad {\bm x} \in \partial \Omega, \\
    \bv_i \cdot \bn (\bm x) = {\bm 0}, \,
      \bn \times \left(\ep(\bv_i)\cdot\bn\right) (\bm x)  &= {\bm 0} \quad {\bm x} \in \partial \Omega,
  \end{aligned}
\end{equation}
for $i = 1, \dots, n$. 

\subsection{Function spaces and norms}

We use standard notation for the Sobolev spaces $L^2(\Omega)$,
$H^1(\Omega)$ and $H(\divv, \Omega)$, and denote the
$L^2(\Omega)$-inner product and norm by $\inner{\cdot}{\cdot}$ and $\|
\cdot \|$, respectively. We let $L^2_0(\Omega)$ denote the space of
$L^2$ functions with zero mean. For a Banach space $U$, its dual space
is denoted $U'$ and the duality pairing between $U$ and $U'$ by
$\langle \cdot, \cdot \rangle_{U' \times U}$.

For the displacement, flux and pressure
spaces, we define
\begin{subequations}
  \label{eq:spaces}
  \begin{align}
    \bU &= \{\bu \in H^1(\Omega)^d : \bu = \boldsymbol 0 \text{ on }  \partial \Omega \},  \\
    \bV_i &= \{ \bv_i \in H^1(\Omega)^d  : \bv_i \cdot \bn = 0  \text{ on } \partial \Omega  \}, \\
    P_i &= L^2_0(\Omega),
  \end{align}
\end{subequations}
for $i = 1, \dots, n$, and subsequently define
\begin{equation}
  \bV = V_1 \times \cdots \times V_n, \quad
  \bP = P_1 \times \cdots \times P_n.  
\end{equation}
We also equip these spaces with the following parameter-weighted inner products
\begin{subequations}
  \label{norms}
  \begin{align}
    (\bu, \bw)_{\bU} & = (2\mu \ep(\bu), \ep (\bw)) + \lambda(\divv \bu, \divv \bw) ,
    \label{norms-u} \\
    (\bv, \bz)_{\bV} &= \sum_{i=1}^n \inner{\gamma_i \ep(\bv_i)}{\ep(\bz_i)} + \inner{\tau K_i^{-1} \bv_i}{\bz_i}
    + \inner{\Lambda^{-1} \tau^2 {\rm Div} \bv}{{\rm Div} \bz} ,
    \label{norms-v} \\
    (\bp, \bq)_{\bP} &= \inner{\Lambda \bp}{\bq}
    \label{norms-p}
  \end{align}
\end{subequations}
and denote the induced norms by $\|\cdot\|_{\bU}$, $\|\cdot\|_{\bV}$,
and $\|\cdot\|_{\bP}$, respectively. These are indeed inner products
and norms by the assumptions on the material parameters given and in
particular the symmetric positive-definiteness of $\Lambda$. 


\section{Well-posedness of the Biot-Brinkman system}
\label{sec:wellposed}

\subsection{Abstract form and related results}

System~\eqref{eq:bb:matrix} is a special case of the abstract saddle-point problem
\begin{equation}
  \begin{pmatrix}
    A_1 & 0 & B_1^T \\
    0 & A_2 & B_2^T \\
    B_1 & B_2 & - A_3
  \end{pmatrix}
  \begin{pmatrix}
    \bu \\
    \bv \\
    \bp
  \end{pmatrix},
  \label{eq:two-fold}
\end{equation}
where $A_1 : \bm U \rightarrow \bm U'$, $A_2 : \bm V \rightarrow \bm V'$,
and $A_3 : \bm P \rightarrow \bm P'$ are symmetric and positive
(semi-)definite, and $B_1 : \bm U \rightarrow \bm P'$, $B_2 :\bm V \rightarrow
\bm P'$ are linear operators. In terms of bilinear forms, we can
write~\eqref{eq:two-fold} as
\begin{subequations}\label{eq:abstract:forms}
  \begin{align}
    a_1(\bu, \bw) + b_1(\bw, \bp) &= \inner{\bff}{\bw}  \label{eq:abstract:forms1} , \\
    a_2(\bv, \bz) + b_2(\bz, \bp) &= \inner{\br}{\bz} \label{eq:abstract:forms2} , \\
    b_1(\bu, \bq) + b_2(\bv, \bq) - a_3(\bp, \bq) &= \inner{\bg}{\bq} \label{eq:abstract:forms3} .
  \end{align}
\end{subequations}
This abstract form was studied in the context of twofold saddle point
problems and equivalence of inf-sup stability conditions by Howell and
Walkington~\cite{howell2011inf} for the case where $A_3 = A_2 = 0$.

\subsection{Three-field variational formulation of the Biot-Brinkman system}

We consider the following variational formulation of the Biot-Brinkman
system~\eqref{eq:bb:matrix} with the boundary conditions given
by~\eqref{eq:bcs}: given $\boldsymbol f, \boldsymbol r, \boldsymbol
g$, find $(\bu, \bv, \bp) \in \bm U \times \bm V \times \bm P$ such
that~\eqref{eq:abstract:forms} holds with
\begin{subequations}\label{eq:forms}
  \begin{align}
    a_1(\bm u, \bm w) &= \inner{\sigma(\bm u)}{\varepsilon(\bm w)}, \\
    a_2(\bm v, \bm z) &= \ssum_{i=1}^n \inner{\gamma_i \varepsilon(\bm v_i)}{\varepsilon(\bm z_i)} + \inner{\tau K_i^{-1} \bm v_i}{\bm z_i} 
  \\
    a_3(\bm p, \bm q) &= 
    \ssum_{i=1}^n \inner{s_i p_i}{q_i}
    - \sum_{i, j=1}^n\inner{\tau \beta_{ij} p_{j}}{q_i} \\
    b_1(\bm w, \bm p) &= \ssum_{i=1}^n \inner{\div \bm w}{\alpha_i p_i} \equiv \inner{\div \bm w}{\bm \alpha \cdot \bm p} , \\
    b_2(\bm v, \bm q) &= \ssum_{i=1}^n \inner{\tau \div \bm v_i}{\bm q_i}, 
  \end{align}
\end{subequations}
for all $\bm w \in \bm U$, $\bm z \in \bm V$, and $\bm q \in \bm P$. Equivalently,
$(\bu, \bv, \bp) \in \bm U \times \bm V \times \bm P$ solves
\begin{equation}
  \label{eq:bb}
  \mathcal{A}((\bu, \bv, \bp), (\bw, \bz, \bq)) = \inner{(\boldsymbol f, \boldsymbol r, \boldsymbol g)}{(\bz, \bw, \bq)}
\end{equation}
for all $(\bz, \bw, \bq) \in \bm U \times \bm V \times \bm P$ where
\begin{equation}\label{eq:generalized_BB}
  \begin{split}
  \mathcal{A}((\bu, \bv, \bp), (\bw, \bz, \bq))
  = \, &a_1(\bu, \bw) + a_2(\bv, \bz) + b_1(\bw, \bp) + b_1(\bu, \bq) \\ &+ b_2(\bz, \bp) + b_2(\bw, \bq) - a_3(\bp, \bq) .
  \end{split}
\end{equation}
We refer to~\eqref{eq:abstract:forms}--\eqref{eq:forms}, or also \eqref{eq:bb}, as a three-field formulation of the
Biot-Brinkman system, with three-field referring to the three groups
of fields (displacement, fluxes and pressures).


\subsection{Stability properties}


In this section we prove the main theoretical result of this paper, that is, the uniform well-posedness of problem~\eqref{eq:abstract:forms}--\eqref{eq:forms}
under the norms induced by~\eqref{norms}, as stated in \cref{continu_stability}. The proof utilizes the abstract framework for the stability analysis of
perturbed saddle-point problems that has recently been presented in~\cite{hong2021framework}. It is performed in two steps. In the first step, we recast the system
\eqref{eq:abstract:forms}--\eqref{eq:forms} into the following two-by-two (single) perturbed saddle-point problem
\begin{align}\label{eq:abstract_perturbedSPP}
 \mathcal A((\bu, \bv, \bm p),(\bw, \bz, \bm q)) & = 
 \mathcal A((\bar{\bu}, \bm p),(\bar{\bw},\bm q)) \\ &=a(\bar{\bu},\bar{\bw})
+b(\bar{\bw},\bm p) + b(\bar{\bu},\bm q) - c(\bm p,\bm q),\nonumber
 \end{align} 
where $\bar{\bu}=(\bu,\bv)$, $\bar{\bw}=(\bw,\bz)$ and 
\begin{align*}
a(\bar{\bu},\bar{\bw})&=a_1(\bu,\bw)+a_2(\bv,\bz),\\ 
b(\bar{\bw},\bm p)&=b_1(\bw,\bm p)+b_2(\bz,\bm p),\\ 
c(\bm p,\bm q)&=a_3(\bm p,\bm q),
\end{align*} 
with $a_1(\cdot,\cdot)$, $a_2(\cdot,\cdot)$, $a_3(\cdot,\cdot)$, $b_1(\cdot,\cdot)$ 
and $b_2(\cdot,\cdot)$ as defined in \eqref{eq:forms}.
Then, according to Theorem~5 in~\cite{hong2021framework}, for properly chosen seminorms
$\vert \cdot \vert_{\bm Q}$ and $\vert \cdot \vert_{\bar{\bm V}}$, which are specified in 
\cref{continu_stability_in_combined_norm} below, the uniform well-posedness of this problem is guaranteed
under the fitted (full) norms
\begin{align}
\Vert \bm q\Vert^2_{\bm Q} = \vert \bm q\vert_{\bm Q}^2+ c(\bm q,\bm q)= :
\langle \bar{Q}\bm q,\bm q \rangle_{\bm Q'\times \bm Q}, \label{q:full} \\
\Vert \bar{\bw}\Vert^2_{\bar{\bm V}} = \vert \bar{\bw}\vert_{\bar{\bm V}}^2 
+ \langle B \bar{\bw},\bar{Q}^{-1}B\bar{\bw} \rangle_{\bm Q'\times \bm Q},  \label{v:full}
\end{align}  
if the following two conditions are satisfied for positive constants $c_a$ and $c_b$ which are
independent of all model parameters:
\begin{equation}\label{a:coerc}
a(\bar{\bv},\bar{\bv}) \ge c_a \vert \bar{\bv} \vert_{\bar{\bm V}}^2, \qquad 
\forall \bar{\bv} \in \bar{\bm V},
\end{equation}
 \begin{equation}\label{b:inf_sup}
\sup_{\bar{\bv}\in \bar{\bm V}}\frac{b(\bar{\bv},\bm q)}{\Vert \bar{\bv} \Vert_{\bar{\bm V}}} \ge 
c_b \vert \bm q \vert_{\bm Q}, \qquad \forall \bm q\in \bm Q.
\end{equation} 
This means that under the conditions~\eqref{a:coerc} and \eqref{b:inf_sup} 
the bilinear form in~\eqref{eq:abstract_perturbedSPP} satisfies the estimates
\begin{equation}\label{continuity_in_comb_norm}
|\mathcal A((\bu, \bv,\bm p),(\bw,\bz,\bm q))|\le C_b \| (\bu,\bv,\bm p) \|_{\bar{\bm X}} \| (\bw,\bz,\bm q) \|_{\bar{\bm X}},
\end{equation}
and
\begin{align}\label{stability_in_comb_norm} 
\inf_{({\boldsymbol u},{\boldsymbol v},{\boldsymbol p})\in {\boldsymbol X}}
\sup_{({\boldsymbol w},{\boldsymbol z},{\boldsymbol q})\in {\boldsymbol X}}
\frac{\mathcal{A}(({\boldsymbol u},{\boldsymbol v},{\boldsymbol p}),({\boldsymbol w},{\boldsymbol z},{\boldsymbol q}))}
{ \| (\bu,\bv,\bm p) \|_{\bar{\bm X}}\| (\bw,\bz,\bm q) \|_{\bar{\bm X}}} \geq \omega,
\end{align}
for the combined norm $\| (\cdot,\cdot,\cdot) \|_{\bar{\bm X}}$ defined by
\begin{equation}\label{eq:combined_norm}
\Vert (\bw,\bz,\bm q) \Vert_{\bar{\bm X}}^2 :=  \Vert \bm q\Vert_{\bm Q} ^2 + \Vert \bar{\bw}\Vert_{\bar{\bm V}}^2
\end{equation}
on the space $\bm X = {\boldsymbol U} \times {\boldsymbol V} \times {\boldsymbol P}$
with constants $C_b$ and $\omega$ that do not depend on any of the model parameters.

Before we turn to the proof of
estimates~\eqref{continuity_in_comb_norm} and
\eqref{stability_in_comb_norm} in
Theorem~\ref{continu_stability_in_combined_norm} below, we recall appropriate 
inf-sup conditions for the spaces ${\boldsymbol U}$, ${\boldsymbol
  V}$, ${\boldsymbol P}$ in Lemma~\ref{divinf-sup}.
\begin{lemma}\label{divinf-sup}
The following conditions hold with constants $\beta_d > 0$ and $\beta_s > 0$:
  \begin{eqnarray}
\inf_{q\in P_i} \sup_{{\boldsymbol v} \in {\boldsymbol V}_i} \frac{({\rm div} {\boldsymbol v}, q)}{\|{\boldsymbol v}\|_1\|q\|}
\geq \beta_d, \quad i=1,\dots,n, \label{eq:inf_sup_d} \\
\inf_{(q_1,\cdots,q_n)\in P_1\times\cdots\times P_n}
\sup_{{\boldsymbol u}\in {\boldsymbol U}}
\frac{\left({\rm div} {\boldsymbol u}, \sum\limits_{i=1}^n q_i\right)}{\|{\boldsymbol u}\|_1\left\|\sum\limits_{i=1}^n q_i\right\|} \geq \beta_s  .
\label{eq:inf_sup_s}
\end{eqnarray}
\end{lemma}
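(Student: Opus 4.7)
Both estimates will be reduced to the classical Stokes inf-sup condition, namely the surjectivity of $\div : H^1_0(\Omega)^d \to L^2_0(\Omega)$ with bounded right inverse, which holds on any bounded Lipschitz domain and yields a constant $\beta_\ast > 0$. The plan is to exploit obvious inclusions of function spaces and aggregation of pressures, rather than to construct any new bounded right inverse.

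For \eqref{eq:inf_sup_d}, I would first note that every element of $H^1_0(\Omega)^d$ automatically satisfies the no-penetration condition $\bv \cdot \bn = 0$ on $\partial \Omega$, so $H^1_0(\Omega)^d \subset \bV_i$. Hence, for any fixed $q \in P_i = L^2_0(\Omega)$ the supremum over the larger space $\bV_i$ dominates the supremum over $H^1_0(\Omega)^d$, and the classical Stokes inf-sup applied to that smaller space yields the conclusion with $\beta_d = \beta_\ast$.

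For \eqref{eq:inf_sup_s}, the key observation is that for any tuple $(q_1, \ldots, q_n) \in P_1 \times \cdots \times P_n$ the sum $q := \sum_{i=1}^n q_i$ still belongs to $L^2_0(\Omega)$, as a finite sum of zero-mean $L^2$ functions. The left-hand side of \eqref{eq:inf_sup_s} is then literally the Stokes inf-sup quotient for the single scalar pressure $q$ on $\bU = H^1_0(\Omega)^d$, so the classical estimate applies directly and yields the bound with $\beta_s = \beta_\ast$.

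No serious obstacle arises: the argument is a direct application of Stokes' inf-sup via a space inclusion and an aggregation of pressures. The only minor point worth mentioning is that the $H^1$-type norm $\|\cdot\|_1$ used in the statement is equivalent to the norm under which the classical Stokes inf-sup is formulated on $H^1_0(\Omega)^d$ (by Poincar\'e's inequality), so the resulting constants $\beta_d, \beta_s$ are independent of the model parameters of the Biot--Brinkman system and depend only on the domain $\Omega$.
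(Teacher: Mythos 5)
Your proposal is correct and takes essentially the same route as the paper, which simply cites the classical references (Brezzi; Boffi--Brezzi--Fortin) for exactly these inf-sup conditions; your reduction via the inclusion $H^1_0(\Omega)^d \subset \bV_i$ and the aggregation $q = \sum_i q_i \in L^2_0(\Omega)$ is precisely what makes that citation applicable. No gap.
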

\begin{proof}
  See~\cite{Brezzi1974existence,Boffi2013mixed}.
\end{proof}

\begin{theorem}\label{continu_stability_in_combined_norm}
Consider problem~\eqref{eq:abstract:forms}--\eqref{eq:forms} on the space
$\bm X = {\boldsymbol U} \times {\boldsymbol V} \times {\boldsymbol P}= \bar{\boldsymbol V} \times {\boldsymbol Q}$ and define
the combined norm $\Vert \cdot \Vert_{\bar{\bm X}}$ via \eqref{eq:combined_norm} where the fitted norms $\Vert \cdot \Vert_{\bm Q}$ and
$\Vert \cdot \Vert_{\bar{\bm V}}$ are defined by
\eqref{q:full}--\eqref{v:full} with seminorms 
\begin{align}
\vert \bm q\vert^2_{\bm Q} 
& = ((\Lambda_3+\Lambda_4)\bm q,\bm q), \label{q:semi}\\
\vert \bar{\bw}\vert^2_{\bar{\bm V}} & = a(\bar{\bw},\bar{\bw}). \label{v:semi}
\end{align}
Then, the continuity and stability estimates \eqref{continuity_in_comb_norm} and \eqref{stability_in_comb_norm} hold with positive 
constants $C_b$ and $\omega$ that are independent of all model parameters.
\end{theorem}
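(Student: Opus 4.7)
The plan is to invoke Theorem~5 of~\cite{hong2021framework} on the reformulated perturbed saddle-point problem~\eqref{eq:abstract_perturbedSPP}, which reduces the task to verifying the two parameter-independent hypotheses~\eqref{a:coerc} and~\eqref{b:inf_sup} for the prescribed seminorms. With the stated choice $|\bar{\bw}|_{\bar{\bm V}}^2 = a(\bar{\bw},\bar{\bw})$, the coercivity~\eqref{a:coerc} holds trivially with $c_a=1$, so the entire weight of the argument lies on the inf-sup condition~\eqref{b:inf_sup} against $|\bm q|_{\bm Q}^2 = ((\Lambda_3+\Lambda_4)\bm q, \bm q)$.

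To establish~\eqref{b:inf_sup} I would treat the two summands in $|\bm q|_{\bm Q}^2$ independently. For the $\Lambda_4 = (2\mu+\lambda)^{-1}\bm\alpha\bm\alpha^T$ piece, apply the Stokes inf-sup~\eqref{eq:inf_sup_s} to obtain $\bw_0 \in \bm U$ with $\divv \bw_0 = \bm\alpha\cdot\bm q$ and $\|\bw_0\|_1 \lesssim \|\bm\alpha\cdot\bm q\|$, and test with $\bar{\bv}_1=(\bw_0,\bm 0)$. For the $\Lambda_3 = \tau^2 R I$ piece, apply~\eqref{eq:inf_sup_d} component-wise to obtain $\bz_{0,i}\in \bm V_i$ with $\divv \bz_{0,i}=q_i$ and $\|\bz_{0,i}\|_1 \lesssim \|q_i\|$, and test with $\bar{\bv}_2=(\bm 0, \bz_0)$. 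A short computation gives $b(\bar{\bv}_1,\bm q)=\|\bm\alpha\cdot\bm q\|^2$ and $b(\bar{\bv}_2,\bm q)=\tau\|\bm q\|^2$.

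The main obstacle is bounding the full $\bar{\bm V}$-norm of each test function, since this includes the perturbation block $\langle B\bar{\bv},\bar Q^{-1}B\bar{\bv}\rangle$. The spectral inequalities $\Lambda \succeq \Lambda_4$ and $\Lambda \succeq \Lambda_3 = \tau^2 RI$ yield, via a Rayleigh-quotient argument, $\bm\alpha^T\Lambda^{-1}\bm\alpha \leq 2\mu+\lambda$ and $\tau^2\Lambda^{-1}\preceq R^{-1}I$, respectively. Combined with $\|\bw_0\|_{\bm U}^2 \lesssim (2\mu+\lambda)\|\bw_0\|_1^2$ and the elementary estimate $a_2(\bz_0,\bz_0) \leq \sum_i \tau K_i^{-1}(1+\nu_i)\|\bz_{0,i}\|_1^2 \leq R^{-1}\|\bm q\|^2$, these produce $\|\bar{\bv}_1\|_{\bar{\bm V}}^2 \lesssim (2\mu+\lambda)\|\bm\alpha\cdot\bm q\|^2$ and $\|\bar{\bv}_2\|_{\bar{\bm V}}^2 \lesssim R^{-1}\|\bm q\|^2$. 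Substituting into the two quotients yields lower bounds of $(\Lambda_4\bm q,\bm q)^{1/2}$ and $(\Lambda_3\bm q,\bm q)^{1/2}$, and the elementary inequality $\max\{a,b\}\geq \tfrac{1}{\sqrt 2}\sqrt{a^2+b^2}$ packages the two into~\eqref{b:inf_sup} with a constant $c_b$ that does not depend on any model parameter.

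Finally, I expect the continuity~\eqref{continuity_in_comb_norm} to fall out term-by-term from Cauchy--Schwarz against the parameter-weighted norms, with the $b_1$ and $b_2$ cross-terms absorbed precisely into $\langle B\bar{\bw},\bar Q^{-1}B\bar{\bw}\rangle$ and the $a_3$-contribution into $\|\bm q\|_{\bm Q}^2$; Theorem~5 of~\cite{hong2021framework} then automatically delivers~\eqref{stability_in_comb_norm} with a uniform $\omega$. The principal technical hurdle is the spectral comparison $\bm\alpha^T\Lambda^{-1}\bm\alpha \leq 2\mu+\lambda$ and its $\Lambda_3$-analogue: these are what convert the \emph{dominance} of $\Lambda_3$ and $\Lambda_4$ inside $\Lambda$ into genuine parameter-robust control of the perturbation block, and they are the only step where the four-matrix decomposition of $\Lambda$ is really used.
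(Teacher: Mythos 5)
Your proposal is correct and follows the paper's overall strategy — recast as a perturbed saddle-point problem, invoke Theorem~5 of~\cite{hong2021framework}, note that \eqref{a:coerc} is trivial with $c_a=1$, and reduce everything to \eqref{b:inf_sup} via the divergence inf-sup conditions of \cref{divinf-sup} — but your verification of \eqref{b:inf_sup} takes a genuinely different route. The paper constructs a \emph{single} test pair with the scalings $-\div \bw = \tfrac{1}{2\mu+\lambda}\sum_i\alpha_i p_i$ and $-\div \bz_i = \tau R p_i$ built in, which forces the identity $B\bar{\bw}=(\Lambda_3+\Lambda_4)\bm p$; the perturbation block is then controlled essentially for free via $(\Lambda^{-1}B\bar{\bw},B\bar{\bw})\le((\Lambda_3+\Lambda_4)^{-1}B\bar{\bw},B\bar{\bw})=\vert\bm p\vert_{\bm Q}^2$, and $b(\bar{\bw},\bm p)=\vert\bm p\vert_{\bm Q}^2$ exactly. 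You instead use two unscaled test functions $(\bw_0,\bm 0)$ and $(\bm 0,\bz_0)$, bound the two quotients separately against $(\Lambda_4\bm q,\bm q)^{1/2}$ and $(\Lambda_3\bm q,\bm q)^{1/2}$, and combine with $\max\{a,b\}\ge\tfrac{1}{\sqrt 2}\sqrt{a^2+b^2}$; the price is that you must control the perturbation block of each test function by hand, which is where your spectral estimates $\bm\alpha^T\Lambda^{-1}\bm\alpha\le 2\mu+\lambda$ and $\tau^2\Lambda^{-1}\preceq R^{-1}I$ enter (both are valid: the first follows from $\Lambda\succeq\tfrac{1}{2\mu+\lambda}\bm\alpha\bm\alpha^T$ by the Cauchy--Schwarz argument you sketch, the second from $\Lambda\succeq\Lambda_3$). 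Both arguments also use the same cancellation $(\gamma_i+\tau K_i^{-1})\le R^{-1}$ from the definition of $R$. Your version is slightly more modular — each of $\Lambda_3$ and $\Lambda_4$ is handled independently, and the splitting trick generalizes readily when $\vert\cdot\vert_{\bm Q}$ has more summands — whereas the paper's single scaled test function makes the perturbation-block estimate essentially automatic and yields the explicit constant $c_b=(\beta_s^{-2}+1)^{-1/2}$. Both deliver parameter-independent constants, so the proposal is a sound alternative proof.
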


\begin{proof}
To prove statement~\eqref{continuity_in_comb_norm}, one uses the Cauchy-Schwarz inequality and the definition of the norms. 

In order to prove \eqref{stability_in_comb_norm} we verify the conditions of Theorem 5 in~\cite{hong2021framework}, i.e., conditions~\eqref{a:coerc} and~\eqref{b:inf_sup}. Noting that $\vert \bar{\bw}\vert^2_{\bar{\bm V}} = a(\bar{\bw},\bar{\bw})$, we find that condition~\eqref{a:coerc} trivially holds with $c_a=1$ so it remains to show~\eqref{b:inf_sup}. The bilinear form $b$ is induced by the operator $B:\bar{\bm V}\rightarrow \bm Q'$ that is given by
\begin{align*}
B & = \begin{pmatrix}
-\alpha_1\div & -\tau\div & 0 & 0& \ldots & 0 \\
-\alpha_2\div & 0 & -\tau\div & 0 & \ldots & 0 \\
-\alpha_3\div & 0 & 0 & -\tau\div & \ldots  & 0  \\
\vdots & \vdots & \vdots & \vdots & \ddots &  \vdots  \\
-\alpha_n\div & 0 & 0 &0& \ldots  & -\tau\div
\end{pmatrix}.
\end{align*}
Thanks to \cref{divinf-sup}, for a given $(\bar{\bu},\bm p)$ we can
choose test functions $\bar{\bw}=(\bw,\bz)$ such that
\begin{align*}
-\div \bw = \frac{1}{2\mu+\lambda}\sum_{i=1}^n \alpha_ip_i, \quad 
\Vert \bw \Vert_1 \le \beta_s^{-1} \frac{1}{2\mu+\lambda} \Vert 
\sum_{i=1}^n \alpha_ip_i\Vert ,
\end{align*} 
\begin{align*}
-\div \bz_i = \tau R p_i, \quad 
\Vert \bz_i \Vert_1 \le \beta_s^{-1}\tau R \Vert p_i\Vert, \quad i=1,\ldots,n.
\end{align*} 

With these choices we find that
\begin{align*}
b(\bar{\bw},\bm p) & = - (\div \bw, \sum_{i=1}^n\alpha_i p_i)- \sum_{i=1}^n  (\tau \div 
\bz_i, p_i) \\
& = \frac{1}{2\mu+\lambda} \left(\sum_{i=1}^n\alpha_i p_i,\sum_{i=1}^n \alpha_ip_i \right)
+ \sum_{i=1}^n (\tau^2 Rp_i,p_i) \\
& = (\Lambda_4 \bm p,\bm p)+(\Lambda_3 \bm p,\bm p)  = \vert \bm p\vert_{\bm Q}^2.
\end{align*}

In view of~\eqref{v:full} and noting that $\langle B \bar{\bw},\bar{Q}^{-1}B\bar{\bw} \rangle_{\bm Q'\times \bm Q}
= (\Lambda^{-1}B \bar{\bw},\bar{\bw})$, we obtain 
\begin{align*}
\Vert \bar{\bw}\Vert_{\bar{\bm V}}^2 & = 2 \mu(\varepsilon(\bw),\varepsilon(\bw))
+ \lambda (\div \bw,\div \bw)
+\sum_{i=1}^n \gamma_i (\varepsilon(\bz_i),\varepsilon(\bz_i)) \\
& \quad + \sum_{i=1}^n (\tau K_i^{-1}\bz_i,\bz_i)+(\Lambda^{-1}B \bar{\bw},B \bar{\bw})\\
& \le \beta_s^{-2} (2\mu+\lambda)\left(\frac{1}{2\mu+\lambda}\right)^2 
\Vert \sum_{i=1}^n \alpha_ip_i\Vert^2 + \sum_{i=1}^n \gamma_i \beta_s^{-2} \tau^2 R^{2} 
\Vert p_i\Vert^2 \\
& \quad + \sum_{i=1}^n \tau K_i^{-1}\beta_s^{-2} \tau^2 R^{2}\Vert p_i\Vert^2 +(\Lambda^{-1}B \bar{\bw},B \bar{\bw}) \\
& \le \beta_s^{-2} \frac{1}{2\mu+\lambda} \Vert \sum_{i=1}^n \alpha_ip_i\Vert^2 
+ \beta_s^{-2} \sum_{i=1}^n (\gamma_i+\tau K_i^{-1})\tau^2 R^{2} \Vert p_i\Vert^2  +(\Lambda^{-1}B \bar{\bw},B \bar{\bw}) \\
& \le \beta_s^{-2} \frac{1}{2\mu+\lambda} \Vert \sum_{i=1}^n \alpha_i p_i\Vert^2 + \beta_s^{-2} 
\sum_{i=1}^n \tau^2 R \Vert p_i\Vert^2 +(\Lambda^{-1}B \bar{\bw},B \bar{\bw}) \\
& \le  \beta_s^{-2} \left((\Lambda_4 \bm p,\bm p)+(\Lambda_3 \bm p,\bm p)\right)
+((\Lambda_3+\Lambda_4)^{-1}B \bar{\bw},B \bar{\bw}) \\
& \le (\beta_s^{-2}+1) \vert \bm p\vert_{\bm Q}^2,
\end{align*} 
where we have also used $(\Lambda^{-1}B \bar{\bw},B \bar{\bw}) \le ((\Lambda_3+\Lambda_4)^{-1}B \bar{\bw},B \bar{\bw})$
and $B \bar{\bw}=(\Lambda_3+\Lambda_4)\bm p$. Finally,~\eqref{b:inf_sup} follows from 
\begin{align*}
\sup_{\bar{\bv}\in \bar{\bm V}}\frac{b(\bar{\bv},\bm q)}{\Vert \bar{\bv} \Vert_{\bar{\bm V}}} \ge 
\frac{b(\bar{\bw},\bm q)}{\Vert \bar{\bw} \Vert_{\bar{\bm V}}} \ge 
\frac{1}{\sqrt{\beta_s^{-2}+1}}\frac{\vert \bm q \vert_{\bm Q}^2}{\vert \bm q \vert_{\bm Q}}= c_b \vert \bm q \vert_{\bm Q},\quad \forall \bm q\in \bm Q.
\end{align*} 
\end{proof}

We have now established the well-posedness of the Biot-Brinkman problem under the specific combined norm $\Vert \cdot \Vert_{\bar{\bm X}}$ of the
form \eqref{eq:combined_norm}, specified through~\eqref{q:semi} and~\eqref{v:semi}. Next, we show that this combined norm is equivalent to the norm $\Vert \cdot \Vert_{\bm X}$ defined by
\begin{align}\label{equiv_norm}
\Vert  ({\boldsymbol w},{\boldsymbol z},{\boldsymbol q}) \Vert^2_{\bm X} :=
\Vert {\boldsymbol w} \Vert_{\boldsymbol U}^2 + \Vert {\boldsymbol z} \Vert_{\boldsymbol V}^2 + \Vert {\boldsymbol q} \Vert_{\boldsymbol P}^2.
\end{align}
The following Lemma is useful in establishing this norm equivalence, cf.~\cite[Lemma 2.1]{hong2020parameter} where the statement has been
proven for $\boldsymbol\alpha=(1,1,\dots,1)^T$.
\begin{lemma}\label{etauu:7}
For any $a>0$ and $b>0$ and $\boldsymbol \alpha=(\alpha_1,\dots,\alpha_n)^T$, we have that
\begin{equation}
(a I_{n\times n}+b \boldsymbol \alpha\boldsymbol \alpha^T)^{-1}=a^{-1}I-a^{-1}(ab^{-1}+\bm \alpha^T\bm \alpha)^{-1}\bm \alpha\bm \alpha^T ,
\end{equation}
and 
\begin{equation}
\bm \alpha^T(a I_{n\times n}+b \boldsymbol \alpha\boldsymbol \alpha^T)^{-1}\bm \alpha=\frac{\bm \alpha^T\bm \alpha}{ab^{-1}+\bm \alpha^T\bm \alpha}b^{-1}\le b^{-1}.
\end{equation}
\end{lemma}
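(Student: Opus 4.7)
The identity in the first part is the classical Sherman--Morrison rank-one update formula applied to the pair $(aI, b\,\boldsymbol\alpha\boldsymbol\alpha^T)$, so the cleanest route is to verify it directly by multiplying the candidate inverse on, say, the left and checking that the result reduces to the identity. I would compute
\begin{equation*}
  \bigl(a I + b\,\boldsymbol\alpha\boldsymbol\alpha^T\bigr)\bigl[a^{-1} I - a^{-1}(ab^{-1} + \boldsymbol\alpha^T\boldsymbol\alpha)^{-1}\boldsymbol\alpha\boldsymbol\alpha^T\bigr],
\end{equation*}
expand into four terms, collect the two rank-one pieces on the common denominator $ab^{-1}+\boldsymbol\alpha^T\boldsymbol\alpha$, and use the scalar identity $b\,\boldsymbol\alpha^T\boldsymbol\alpha = b(ab^{-1}+\boldsymbol\alpha^T\boldsymbol\alpha) - a$ to see that the coefficient of $\boldsymbol\alpha\boldsymbol\alpha^T$ collapses to zero. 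All remaining computations are scalar since every term is of the form $(\text{scalar})\,I$ or $(\text{scalar})\,\boldsymbol\alpha\boldsymbol\alpha^T$, so no genuine matrix manipulation is required.

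For the second identity, the plan is simply to left- and right-multiply the formula from the first part by $\boldsymbol\alpha^T$ and $\boldsymbol\alpha$. This gives
\begin{equation*}
  \boldsymbol\alpha^T\bigl(aI + b\,\boldsymbol\alpha\boldsymbol\alpha^T\bigr)^{-1}\boldsymbol\alpha
  = a^{-1}\boldsymbol\alpha^T\boldsymbol\alpha - a^{-1}(ab^{-1}+\boldsymbol\alpha^T\boldsymbol\alpha)^{-1}(\boldsymbol\alpha^T\boldsymbol\alpha)^2,
\end{equation*}
and after pulling $a^{-1}\boldsymbol\alpha^T\boldsymbol\alpha$ out and combining over the common denominator $ab^{-1}+\boldsymbol\alpha^T\boldsymbol\alpha$, the numerator reduces to $ab^{-1}$, yielding $\boldsymbol\alpha^T\boldsymbol\alpha\,b^{-1}/(ab^{-1}+\boldsymbol\alpha^T\boldsymbol\alpha)$, exactly the claimed closed form. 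The bound by $b^{-1}$ is immediate since $a>0$ and $b>0$ imply $\boldsymbol\alpha^T\boldsymbol\alpha/(ab^{-1}+\boldsymbol\alpha^T\boldsymbol\alpha)\le 1$.

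\textbf{Main obstacle.} There is essentially no obstacle: the statement is a scalar manipulation after invoking Sherman--Morrison, and the only thing that could go wrong is bookkeeping with the factors of $a$ and $b$, since the paper writes $ab^{-1}+\boldsymbol\alpha^T\boldsymbol\alpha$ rather than the more common $1+ba^{-1}\boldsymbol\alpha^T\boldsymbol\alpha$. I would therefore present the verification in the direct-multiplication style rather than quoting Sherman--Morrison, which makes the algebra self-contained and avoids any sign/factor confusion. The degenerate case $\boldsymbol\alpha = 0$ is trivially covered since then both sides of each identity reduce to $a^{-1}I$ and $0$, respectively, and the denominator $ab^{-1}+\boldsymbol\alpha^T\boldsymbol\alpha=ab^{-1}>0$ remains well defined.
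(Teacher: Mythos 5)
Your verification is correct: the Sherman--Morrison computation, the sandwiching by $\boldsymbol\alpha^T$ and $\boldsymbol\alpha$, and the final bound all check out. The paper itself only cites the analogous computation in \cite{hong2020parameter} (Lemma~2.1, proved there for $\boldsymbol\alpha=(1,\dots,1)^T$), so your self-contained direct-multiplication argument is exactly the intended proof, merely written out in full.
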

\begin{proof}
The proof follows the lines of the proof of~Lemma 2.1 in~\cite{hong2020parameter}.
\end{proof}
Now we can establish the following norm equivalence result.
\begin{lemma}\label{norm_equivalence}
The norm~\eqref{equiv_norm} defined in terms of~\eqref{norms} is equivalent to the combined norm~\eqref{eq:combined_norm}
based on \eqref{q:semi} and \eqref{v:semi}.
\end{lemma}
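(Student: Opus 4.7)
My plan is to establish the norm equivalence by treating the pressure contribution and the velocity contribution separately. The pressure part will turn out to be an exact equality, while for the velocity part everything reduces to controlling one scalar quantity in a parameter-uniform fashion.

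First, for the pressure I would observe that by the definitions of $a_3$, $s_i$, $\Lambda_1$ and $\Lambda_2$, one has $a_3(\bq,\bq) = ((\Lambda_1+\Lambda_2)\bq,\bq)$. Combined with $|\bq|_{\bm Q}^2 = ((\Lambda_3+\Lambda_4)\bq,\bq)$ from~\eqref{q:semi}, this immediately yields $\|\bq\|_{\bm Q}^2 = (\Lambda \bq,\bq) = \|\bq\|_{\bP}^2$, so the pressure contributions to the two combined norms are identical. Also, the terms $a_1(\bw,\bw) = \|\bw\|_{\bU}^2$ and $a_2(\bz,\bz)$ in $|\bar{\bw}|_{\bar{\bm V}}^2$ already account exactly for $\|\bw\|_{\bU}^2$ and for the first two summands of $\|\bz\|_{\bV}^2$.

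For the remaining velocity part, the block structure of $B$ gives $(\Lambda^{-1}B\bar{\bw}, B\bar{\bw}) = (\Lambda^{-1}(\bm\alpha\,\divv\bw + \tau\,{\rm Div}\,\bz), \bm\alpha\,\divv\bw + \tau\,{\rm Div}\,\bz)$, whereas the corresponding term in $\|\bz\|_{\bV}^2$ is only $(\Lambda^{-1}\tau\,{\rm Div}\,\bz, \tau\,{\rm Div}\,\bz)$. I would apply the elementary Young-type inequalities $|a+b|^2 \le 2(|a|^2+|b|^2)$ and $|b|^2 \le 2(|a+b|^2+|a|^2)$ in the $\Lambda^{-1}$ inner product to bound each of these terms by a multiple of the other plus the residual quantity $(\Lambda^{-1}\bm\alpha\,\divv\bw, \bm\alpha\,\divv\bw)$. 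This reduces both directions of the desired equivalence to controlling $(\Lambda^{-1}\bm\alpha\,\divv\bw, \bm\alpha\,\divv\bw)$ by a uniform multiple of $\|\bw\|_{\bU}^2$.

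To bound this residual term uniformly, I would first use that $\Lambda_1+\Lambda_2$ is symmetric positive semi-definite (as noted after~\eqref{eq:bb:matrix}) to conclude $\Lambda \succeq \Lambda_3 + \Lambda_4 = \tau^2 R\, I + \tfrac{1}{2\mu+\lambda}\bm\alpha\bm\alpha^T$ in the Loewner order, so that $\bm\alpha^T \Lambda^{-1}\bm\alpha \le \bm\alpha^T(\Lambda_3+\Lambda_4)^{-1}\bm\alpha$. Applying Lemma~\ref{etauu:7} with $a = \tau^2 R$ and $b = 1/(2\mu+\lambda)$ then gives $\bm\alpha^T(\Lambda_3+\Lambda_4)^{-1}\bm\alpha \le 2\mu+\lambda$. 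Finally, the pointwise trace inequality $(\divv\bw)^2 \le d\,|\ep(\bw)|^2$ yields $(2\mu+\lambda)\|\divv\bw\|^2 \le 2\mu\,d\,\|\ep(\bw)\|^2 + \lambda\|\divv\bw\|^2 \le d\,\|\bw\|_{\bU}^2$, and hence $(\Lambda^{-1}\bm\alpha\,\divv\bw,\bm\alpha\,\divv\bw) \le d\,\|\bw\|_{\bU}^2$. Combined with the Young splittings above, this delivers the equivalence with constants depending only on $d$. The main obstacle is precisely this last bound: since $\Lambda$ depends on \emph{all} model parameters ($\mu,\lambda,\tau,c_i,\beta_{ij},\alpha_i,\nu_i,K_i$), any naive approach would lose uniformity, and it is exactly the Sherman–Morrison identity of Lemma~\ref{etauu:7} together with the monotonicity $\Lambda \succeq \Lambda_3+\Lambda_4$ that makes the estimate parameter-free.
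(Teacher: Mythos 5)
Your proof is correct and follows essentially the same route as the paper's: both reduce the equivalence to controlling the residual term $(\Lambda^{-1}\bm\alpha\,\divv\bw,\bm\alpha\,\divv\bw)$ via the bound $\Lambda\succeq\Lambda_3+\Lambda_4$ together with Lemma~\ref{etauu:7} (with $a=\tau^2R$, $b=1/(2\mu+\lambda)$), and then absorb it into $\|\bw\|_{\bU}^2$. The only differences are cosmetic: you use symmetric Young-type inequalities in both directions where the paper uses Cauchy's inequality with $\epsilon=2/3$ for the lower bound and declares the upper bound obvious, and your explicit tracking of the trace constant $d$ in $\|\divv\bw\|^2\le d\,\|\ep(\bw)\|^2$ is in fact slightly more careful than the paper's fixed choice of $\epsilon$, which implicitly relies on $\|\divv\bw\|\le\|\ep(\bw)\|$.
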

\begin{proof}
First, we note that  
\begin{align*}
B \bar{\bw}&=
\begin{pmatrix}
-\alpha_1{\rm div} \bw-\tau\div \bm z_1\\
-\alpha_2{\rm div} \bw-\tau\div \bm z_2\\
\vdots\\
-\alpha_n{\rm div} \bw-\tau\div \bm z_n
\end{pmatrix}
=-{\rm div} \bw
\begin{pmatrix}
\alpha_1\\
\alpha_2\\
\vdots\\
\alpha_n
\end{pmatrix}
+\tau
\begin{pmatrix}
-\div \bm z_1\\
-\div \bm z_2\\
\vdots\\
-\div \bm z_n
\end{pmatrix} \\
&\equiv -\bm \alpha{\rm div} \bw -\tau {\rm Div} \bm z.
 \end{align*} 
Then for any $1> \epsilon > 0$, by Cauchy's inequality, we obtain 
\begin{align*}
 &(\Lambda^{-1}B \bar{\bw},B \bar{\bw})\\
 &= \left(\Lambda^{-1} (\bm \alpha{\rm div} \bw+\tau {\rm Div} \bm z),(\bm \alpha{\rm div} \bw+\tau {\rm Div} \bm z)\right)\\
 &= (\Lambda^{-1} \bm \alpha{\rm div} \bw, \bm \alpha{\rm div} \bw)+ 2(\Lambda^{-1} \bm \alpha{\rm div} \bw, \tau {\rm Div} \bm z)+ (\Lambda^{-1} \tau {\rm Div} \bm z, \tau {\rm Div} \bm z)\\
 &\ge -(\epsilon^{-1}-1) (\Lambda^{-1} \bm \alpha{\rm div} \bw, \bm \alpha{\rm div} \bw)+(1-\epsilon)(\Lambda^{-1} \tau {\rm Div} \bm z, \tau {\rm Div} \bm z)\\
 &\ge -(\epsilon^{-1}-1) ((\Lambda_3+\Lambda_4)^{-1} \bm \alpha{\rm div} \bw, \bm \alpha{\rm div} \bw)+(1-\epsilon)(\Lambda^{-1} \tau {\rm Div} \bm z, \tau {\rm Div} \bm z).
 \end{align*} 
By \cref{etauu:7}, with $a=\tau^2R, b=\frac{1}{2\mu+\lambda}$, we have
\begin{align*}
 &(\Lambda^{-1}B \bar{\bw},B \bar{\bw})\\
 &\ge -(\epsilon^{-1}-1)  ((\Lambda_3+\Lambda_4)^{-1} \bm \alpha{\rm div} \bw, \bm \alpha{\rm div} \bw)+(1-\epsilon)(\Lambda^{-1} \tau {\rm Div} \bm z, \tau {\rm Div} \bm z)\\
 &=-(\epsilon^{-1}-1)  (\bm \alpha^T(\Lambda_3+\Lambda_4)^{-1} \bm \alpha{\rm div} \bw, {\rm div} \bw)+(1-\epsilon)(\Lambda^{-1} \tau {\rm Div} \bm z, \tau {\rm Div} \bm z)\\
 &\ge -(\epsilon^{-1}-1) (2\mu +\lambda) (\div \bm w, \div \bm w)+(1-\epsilon)(\Lambda^{-1} \tau {\rm Div} \bm z, \tau {\rm Div} \bm z).
 \end{align*} 
 Therefore, we get
 \begin{align*}
\Vert \bar{\bw}\Vert_{\bar{\bm V}}^2 & = 2\mu(\varepsilon(\bw),\varepsilon(\bw))
+ \lambda (\div \bw,\div \bw)x
+\sum_{i=1}^n \gamma_i (\varepsilon(\bz_i),\varepsilon(\bz_i))\\
& \quad + \sum_{i=1}^n (\tau K_i^{-1}\bz_i,\bz_i)+(\Lambda^{-1}B \bar{\bw},B \bar{\bw})\\
&\ge 2\mu (\varepsilon(\bw),\varepsilon(\bw))
+ \lambda (\div \bw,\div \bw) -(\epsilon^{-1}-1) (2\mu +\lambda) (\div \bm w, \div \bm w)\\
&\quad +\sum_{i=1}^n \gamma_i (\varepsilon(\bz_i),\varepsilon(\bz_i))
 + \sum_{i=1}^n (\tau K_i^{-1}\bz_i,\bz_i) +(1-\epsilon)(\Lambda^{-1} \tau {\rm Div} \bm z, \tau  {\rm Div} \bm z).
\end{align*} 
Now, for $\epsilon=\frac23$, we obtain 
\begin{align*}
\Vert \bar{\bw}\Vert_{\bar{\bm V}}^2
&\ge 2\mu (\varepsilon(\bw),\varepsilon(\bw))
+ \lambda (\div \bw,\div \bw) -\frac12 (2\mu +\lambda) (\div \bm w, \div \bm w)\\
&\quad +\sum_{i=1}^n \gamma_i (\varepsilon(\bz_i),\varepsilon(\bz_i))
 + \sum_{i=1}^n (\tau K_i^{-1}\bz_i,\bz_i) +\frac13 (\Lambda^{-1} \tau^2{\rm Div} \bm z, {\rm Div} \bm z)\\
 &\ge \frac12\left( 2\mu (\varepsilon(\bw),\varepsilon(\bw))
+ \lambda (\div \bw,\div \bw) \right)\\
&\quad +\frac13 \left(\sum_{i=1}^n \gamma_i (\varepsilon(\bz_i),\varepsilon(\bz_i))
 + \sum_{i=1}^n (\tau K_i^{-1}\bz_i,\bz_i) + (\Lambda^{-1}\tau^2 {\rm Div} \bm z, {\rm Div} \bm z)\right),
\end{align*}
namely 
$
\|\bm w\|^2_{\boldsymbol  U}+ \|\bm z\|^2_{\boldsymbol  V}\lesssim  \Vert \bar{\bw}\Vert_{\bar{\bm V}}^2.
$
On the other hand, it is obvious that 
$$
\Vert \bar{\bw}\Vert_{\bar{\bm V}}^2 \lesssim \|\bm w\|^2_{\boldsymbol  U}+ \|\bm z\|^2_{\boldsymbol  V}.
$$
Together, this gives
$
\Vert \bar{\bw}\Vert_{\bar{\bm V}}^2 \cong \|\bm w\|^2_{\boldsymbol  U}+ \|\bm z\|^2_{\boldsymbol  V}.
$
\end{proof}
In view of \cref{continu_stability_in_combined_norm} and \cref{norm_equivalence}, we conclude that the Biot-Brinkman problem
is also well-posed under the norm~\eqref{equiv_norm} defined in terms of~\eqref{norms}. We summarize our results in the following theorem.
\begin{theorem}\label{continu_stability}
~
\begin{itemize}
\item[(i)] 
There exists a positive constant $C_{b}$ independent of  the parameters $\lambda$, $K_i^{-1}$, $s_i$, ${\beta}_{ij}$, 
$i,j \in \{1,\dots,n\}$, the network scale~$n$ and the time step $\tau$ such that the inequality 
\begin{equation*}
|\mathcal A((\bu,\bv,\bm p),(\bw,\bz,\bm q))|\le C_b (\|\bm u|_{\bm U}+\|\bm v\|_{\bm V}
+\|\bm p\|_{\bm P})  (\|\bm w\|_{\bm U}+\|\bm z\|_{\bm V}+\|\bm q\|_{\bm P})
\end{equation*}
holds true for any $(\bm u, \bm v,\bm p)\in \boldsymbol U\times \boldsymbol V\times \boldsymbol P , (\bm w, \bm z, \bm q)\in \boldsymbol U\times \boldsymbol V\times \boldsymbol P$.
\item[(ii)]  There is a constant $\omega> 0$ independent of the 
parameters $\lambda,K_i^{-1}, s_i, \beta_{ij}$, $i,j \in \{1,\dots,n\}$, the number of networks $n$ and the time step $\tau$ such that 
\begin{align*}
\inf_{({\boldsymbol u},{\boldsymbol v},{\boldsymbol p})\in {\boldsymbol X}}
\sup_{({\boldsymbol w},{\boldsymbol z},{\boldsymbol q})\in {\boldsymbol X}}
\frac{\mathcal{A}(({\boldsymbol u},{\boldsymbol v},{\boldsymbol p}),({\boldsymbol w},{\boldsymbol z},{\boldsymbol q}))}
{( \|{\boldsymbol u}\|_{{\boldsymbol U}}
+\|{\boldsymbol v}\|_{{\boldsymbol V}}+ \|{\boldsymbol p}\|_{{\boldsymbol P}})( \|{\boldsymbol w}\|_{{\boldsymbol U}}
+ \|{\boldsymbol z}\|_{{\boldsymbol V}}+ \|{\boldsymbol q}\|_{{\boldsymbol P}})} \geq \omega,
\end{align*}
where ${\boldsymbol X}:={\boldsymbol U} \times {\boldsymbol V}\times {\boldsymbol P}$.
\item[(iii)]  The MPET system~\eqref{eq:bb} has a unique solution $(\boldsymbol u, \boldsymbol v,\bp)\in \boldsymbol U\times \boldsymbol V\times \boldsymbol P$ and the following stability estimate holds:
\begin{equation*}
\|\boldsymbol u\|_{\boldsymbol U}+\|\boldsymbol  v\|_{\boldsymbol V}+\|\boldsymbol p\|_{\boldsymbol P}\leq C_1 (\|\boldsymbol f\|_{\boldsymbol U'}
+\|\bm g\|_{\bm P'}),
\end{equation*} 
where $C_1$ is a positive constant independent of the parameters $\lambda,K_i^{-1}$, $s_i$, $\beta_{ij}, i,j \in \{1,\dots,n\}$, the network scale $n$ 
and the time step $\tau$, 
and
$\|\boldsymbol f\|_{\boldsymbol U'}=
\sup\limits_{\boldsymbol w\in \boldsymbol U}\frac{(\boldsymbol f, \boldsymbol w)}{\|\boldsymbol w\|_{\boldsymbol U}}$, $\|\boldsymbol g\|_{\boldsymbol P'}=
\sup\limits_{\boldsymbol q\in \boldsymbol P}\frac{(\boldsymbol g,\boldsymbol q)}
{\|\boldsymbol q\|_{\boldsymbol P}}=\|\Lambda^{-\frac{1}{2}} \boldsymbol g\|.
$
\end{itemize} 
\end{theorem}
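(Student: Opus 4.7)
The plan is to deduce all three assertions of Theorem \ref{continu_stability} directly from Theorem \ref{continu_stability_in_combined_norm} together with the norm equivalence of Lemma \ref{norm_equivalence}, both of which already provide constants independent of every model parameter and of the network scale $n$.

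For part~(i), I would start from the continuity estimate \eqref{continuity_in_comb_norm} phrased in the combined norm. Lemma \ref{norm_equivalence} yields $\|(\bw,\bz,\bq)\|_{\bar{\bm X}}^2 \cong \|\bw\|_{\bm U}^2 + \|\bz\|_{\bm V}^2 + \|\bq\|_{\bm P}^2$, and together with the trivial estimate $\|\bw\|_{\bm U}^2 + \|\bz\|_{\bm V}^2 + \|\bq\|_{\bm P}^2 \le (\|\bw\|_{\bm U} + \|\bz\|_{\bm V} + \|\bq\|_{\bm P})^2$ this gives $\|(\bw,\bz,\bq)\|_{\bar{\bm X}} \lesssim \|\bw\|_{\bm U} + \|\bz\|_{\bm V} + \|\bq\|_{\bm P}$ with an absolute constant. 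Substituting this bound on both sides of \eqref{continuity_in_comb_norm} produces the continuity estimate of~(i) with $C_b$ independent of $\lambda$, $K_i^{-1}$, $s_i$, $\beta_{ij}$, $n$ and $\tau$.

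Part~(ii) follows by the symmetric argument. The reverse direction of Lemma \ref{norm_equivalence} combined with $\|\bw\|_{\bm U} + \|\bz\|_{\bm V} + \|\bq\|_{\bm P} \le \sqrt{3}\,(\|\bw\|_{\bm U}^2 + \|\bz\|_{\bm V}^2 + \|\bq\|_{\bm P}^2)^{1/2}$ yields $\|\bw\|_{\bm U} + \|\bz\|_{\bm V} + \|\bq\|_{\bm P} \lesssim \|(\bw,\bz,\bq)\|_{\bar{\bm X}}$. Enlarging the denominators in \eqref{stability_in_comb_norm} on both the trial and the test sides by this bound only rescales the inf-sup constant $\omega$ by an absolute factor.

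For part~(iii), symmetry of $\mathcal A$ together with the inf-sup condition from~(ii) and the continuity from~(i) implies well-posedness of \eqref{eq:bb} through the Banach--Ne\v{c}as--Babu\v{s}ka theorem, and hence the existence of a unique solution $(\bu,\bv,\bp) \in \bm U \times \bm V \times \bm P$. The stability estimate follows by choosing $(\bu,\bv,\bp)$ in the infimum of~(ii), so that
\[
\omega\,(\|\bu\|_{\bm U} + \|\bv\|_{\bm V} + \|\bp\|_{\bm P}) \le \sup_{(\bw,\bz,\bq)\in \bm X} \frac{\mathcal A((\bu,\bv,\bp),(\bw,\bz,\bq))}{\|\bw\|_{\bm U} + \|\bz\|_{\bm V} + \|\bq\|_{\bm P}},
\]
then using \eqref{eq:bb} to rewrite the numerator as $(\bff,\bw) + (\br,\bz) + (\bg,\bq)$ and bounding each term by the appropriate dual norm (so the $\br$-contribution is naturally controlled by $\|\br\|_{\bm V'}$ if retained). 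I anticipate no substantive obstacle; the only point requiring care is to verify that the hidden constants in Lemma \ref{norm_equivalence} carry no dependence on $n$, which holds because the explicit choice $\epsilon=2/3$ made in its proof leads to purely numerical factors, and that the underlying Korn-type inequalities on $\bm U$ and on each $\bm V_i$ are used with constants independent of $\gamma_i$ and $\tau K_i^{-1}$.
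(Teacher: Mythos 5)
Your proposal is correct and follows essentially the same route as the paper: the authors likewise obtain Theorem~\ref{continu_stability} by combining the continuity and inf-sup estimates of Theorem~\ref{continu_stability_in_combined_norm} in the combined norm with the norm equivalence of Lemma~\ref{norm_equivalence} (noting that $\Vert\bm q\Vert_{\bm Q}^2=(\Lambda\bm q,\bm q)=\Vert\bm q\Vert_{\bm P}^2$ exactly), and then deducing well-posedness and the stability bound for the symmetric form. Your side remarks on the $\br$-contribution and on the parameter-independence of the hidden constants are consistent with the paper's treatment.
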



\section{Discrete generalized Biot-Brinkman problems}
\label{sec:uni_stab_disc_model} 

Stable and parameter-robust discretizations for the multiple network poroelasticity equations have been proposed based on a classical three-field formulation using a discontinuous Galerkin (DG) \cite{arnold2002unified,hong2019unified} formulation of the momentum equation resulting in strong mass conservation, see~\cite{hong2019conservative}, or based on a total pressure formulation in the setting of conforming methods in~\cite{lee2019mixed}. These discrete models have been developed as generalizations of the corresponding Biot models, see~\cite{HongKraus2017parameter} in case of conservative discretizations and \cite{Lee2016parameter} in case of the total pressure scheme. A hybridized version of the method in~\cite{HongKraus2017parameter} has recently been presented in~\cite{kraus2021uniformly}. For other conforming parameter-robust discretizations of the Biot model see also~\cite{chen2020robust,rodrigo2018new} and \cite{Kumar2020conservative}, where the latter method is based on a total pressure formulation introducing the flux as a fourth field, which then also results in mass conservation. In this paper we extend the approach from~\cite{hong2019conservative,HongKraus2017parameter} to obtain mass-conservative discretizations for the generalized Biot-Brinkman system~\eqref{eq:abstract:forms}--\eqref{eq:forms}, which generalizes the MPET system.

\subsection{Notation}

Consider a shape-regular triangulation $\mathcal{T}_h$ of the domain $\Omega$ into triangles/tetrahedrons, where the subscript $h$ indicates the mesh-size. 
Following the standard notation, we first denote the set of all interior edges/faces and the set of all boundary edges/faces of $\mathcal{T}_h$  
by $\mathcal{E}_h^{I}$ and $\mathcal{E}_h^{B}$ respectively, their union by $\mathcal{E}_h$ and then we define the broken Sobolev spaces  
$$
H^s(\mathcal{T}_h)=\{\phi\in L^2(\Omega), \mbox{ such that } \phi|_T\in H^s(T) \mbox{ for all } T\in \mathcal{T}_h \}
$$
for $s\geq 1$. 

Next we introduce the notion of jumps $[\cdot]$ and averages $\{\cdot \}$ as follows. 
For any $q\in H^1(\mathcal{T}_h)$, $\bm v \in H^1(\mathcal{T}_h)^d$ and $\bm \tau \in H^1(\mathcal{T}_h)^{d\times d}$ and any $e\in \mathcal{E}_h^{I}$ the jumps are given as
\begin{equation*}
[q]=q|_{\partial T_1\cap e}-q|_{\partial T_2\cap e},\quad
[\bm v]=\bm v|_{\partial T_1\cap e}-\bm v|_{\partial T_2\cap e}
\end{equation*}
and the averages as
\begin{equation*}
\begin{split}
\{\bm v\} &=\frac{1}{2}(\bm v|_{\partial T_1\cap e}\cdot \bm n_1-\bm
v|_{\partial T_2\cap e}\cdot \bm n_2), \quad 
\{\bm \tau\}=\frac{1}{2}(\bm \tau|_{\partial T_1\cap
e} \bm n_1-\bm \tau|_{\partial T_2\cap e} \bm n_2),
\end{split}
\end{equation*}
while for  
$e \in  \mathcal{E}_h^{B}$, 
\[[q]=q|_{e}, ~~ [\bm v]=\bm v|_{e},\quad
\{\bm v\}=\bm v |_{e}\cdot \bm n,\quad
\{\bm \tau\}=\bm \tau|_{e}\bm n.
\]
Here $T_1$ and $T_2$ are any two elements from the triangulation that share an edge or face $e$ while $\bm n_1$ and $\bm n_2$ 
denote the corresponding unit normal vectors to $e$ 
pointing to the exterior of $T_1$ and $T_2$, respectively. 

\subsection{Mixed finite element spaces and discrete formulation}\label{subsec:DGdiscretization}
We consider the following finite element spaces to approximate the displacement, fluxes and pressures:
\begin{eqnarray*}
\bm U_h&=&\{\bm u \in H(\div ,\Omega):\bm u|_T \in \bm U(T),~T \in \mathcal{T}_h;~ \bm u \cdot
\bm n=0~\hbox{on}~\partial \Omega\},
\\[1ex]
\bm V_{i,h}&=&\{\bm v \in H(\div ,\Omega):\bm v|_T \in \bm V_i(T),~T \in \mathcal{T}_h;~ \bm v \cdot
\bm n=0~\hbox{on}~\partial \Omega\},~~i=1,\dots,n,
\\
P_{i,h}&=&\left\{p \in L^2(\Omega):p|_T \in P_i(T),~T \in \mathcal{T}_h; ~\int_{\Omega}p dx=0\right\},~~i=1,\dots,n,
\end{eqnarray*}
where $\bm U(T)/\bm V_{i}(T)/P_{i}(T)={\rm BDM}_{l}(T)/{\rm BDM}_{l}(T)/{\rm P}_{l-1}(T)$ for $l\ge 1$.
Note that for each of these choices $\div  \bm U(T)=\div  \bm V_i(T)=P_i(T)$ is fulfilled.
We remark that the tangential part of the displacement
  boundary condition \eqref{eq:bcs} is enforced by a Nitsche method,
  see e.g. \cite{hansbo2003discontinuous}. Furthermore the orthogonality
  constraint for the pressures in $P_{i,h}$ is realized in the implementation
  by introducing (scalar) Lagrange multipliers.

Let us denote
$
\bm v_h^T=(\boldsymbol v^T_{1,h}, \ldots, \boldsymbol v^T_{n,h}),\, \bm p_h^T=(p_{1,h},\ldots, p_{n,h}),\,
\bm z_h^T=(\boldsymbol z^T_{1,h}, \ldots, \boldsymbol z^T_{n,h})$, $\bm q_h^T=(q_{1,h},\ldots, q_{n,h})$ and
$$
\bm V_h=\boldsymbol V_{1,h}\times\ldots\times\boldsymbol V_{n,h},
\quad \bm P_h= P_{1,h}\times\ldots\times P_{n,h}, \quad  \bm X_h = \bm U_h\times\bm V_h\times \bm P_h.
$$
The discretization of the variational problem~\eqref{eq:abstract:forms}--\eqref{eq:forms} now is given as follows: find $(\bu_h, \bv_h, \bp_h)\in  \bm X_h$, such that for any $(\bw_h, \bz_{h}, \bq_{h})\in  \bm X_{h}$ and $i=1,\dots, n$
\begin{subequations}
  \label{eq:75-77}
  \begin{align}
  a_h(\bm u_h,\bm w_h) +\lambda ( \div \boldsymbol  u_h, \div \boldsymbol  w_h)
  + (\boldsymbol \alpha \cdot \bp_{h}, \div \boldsymbol  w_h) &= (\boldsymbol f, \boldsymbol w_h),\label{eq:75}\\
  \gamma_i a_h(\bm v_{i,h},\bm z_{i,h}) + (\tau K^{-1}_i\bv_{i,h},\bz_{i,h}) + (p_{i,h},\tau \divv \bz_{i,h}) &= 0,  \label{eq:76} \\
  (\divv\bu_h,\alpha_i q_{i,h}) + (\tau \divv\bv_{i,h},q_{i.h})  - s_i (p_{i,h},q_{i,h}) & \nonumber \\
  + \sum_{j=1}^{n}\tau\beta_{ij} (p_{j,h}, q_{i,h}) &= (g_i,q_{i,h}), \label{eq:77} 
\end{align}
\end{subequations}
where
\begin{equation}
  \begin{split}
  a_h(\bm \phi,\bm \psi)=
  \sum _{T \in \mathcal{T}_h} \int_T \ep(\bm{\phi}) :
  \ep(\bm{\psi}) \dx -\sum_{e \in \mathcal{E}_h} \int_e \{\ep(\bm{\phi})\} \cdot [\bm \psi_t] \ds\\
  - \sum _{e \in \mathcal{E}_h} \int_e \{\ep(\bm{\psi})\} \cdot [\bm \phi_t] \ds+\sum _{e
    \in \mathcal{E}_h} \int_e \eta h_e^{-1}[ \bm \phi_t] \cdot [\bm \psi_t] \ds,
  \end{split}
  \label{78}
\end{equation}
and $\eta $ is a stabilization parameter independent of all other problem
parameters, the network scale $n$ and the mesh size $h$. 

We note that the discrete variational problem~\eqref{eq:75-77} has been derived for the weak formulation~\eqref{eq:bb} with
homogeneous boundary conditions. For general rescaled boundary conditions with DG discretizations we refer the reader 
to e.g.~\cite{HongKraus2017parameter}.

\subsection{Stability properties}
\label{subsec:discrete_stability}
For any function $\bm \phi\in \bm H^2(\mathcal{T}_h):=H^2(\mathcal{T}_h)^d$, consider the following mesh dependent norms 


\begin{eqnarray*}
\|\bm{\phi}\|_h^2&=&\sum _{T \in \mathcal{T}_h} 
\|\ep(\bm{\phi})\|_{T}^2+\sum _{e \in \mathcal{E}_h} h_e^{-1}\|[ \bm \phi_t]\|_{e}^2, \\
\|\bm \phi\|_{1,h}^2&=&\sum _{T \in \mathcal{T}_h} \|\nabla\bm{\phi}\|_{T}^2+\sum _{e \in \mathcal{E}_h} h_e^{-1}\|[ \bm{\phi}_t]\|_{e}^2,
\end{eqnarray*}
and
\begin{equation}\label{DGnorm}
\|\bm \phi\|^2_{DG}=\sum _{T \in \mathcal{T}_h} \|\nabla\bm{\phi}\|_{T}^2+\sum _{e \in \mathcal{E}_h} h_e^{-1}\|[ \bm{\phi}_t]\|_{e}^2+\sum _{T \in \mathcal{T}_h}h_T^2|\bm{\phi}|^2_{2,T}.
\end{equation}
Details about the well-posedness and approximation properties of the DG formulation of elasticity, Stokes and Brinkman-type systems 
can be found in~\cite{hong2016robust, honguniformly}.  

Now, for $\bm u\in  H({\rm div}, \Omega) \cap \boldsymbol H^2(\mathcal T_h)$, we define the norm
\begin{equation}\label{U_hnorm}
\|\bm u\|^2_{\bm U_h}=\|\bm u\|^2_{DG}+\lambda \|\div \bm u\|^2
\end{equation}
and  for $\bm v\in H({\rm div}, \Omega) \cap \boldsymbol H^2(\mathcal T_h)$, we define the norm 
\begin{equation}\label{V_hnorm}
\|{\boldsymbol v}\|^2_{\boldsymbol V_h}= \sum_{i=1}^n\big(\gamma_i \| \boldsymbol v_i\|^2_{DG}+(\tau K_i^{-1}{\boldsymbol v}_i,{\boldsymbol v}_i)\big)
+ (\Lambda^{-1}  {\text{Div\,}} {\boldsymbol v} ,{\text{Div\,}} {\boldsymbol v}).
\end{equation}
The well-posedness and approximation properties of the DG formulation are detailed in~\cite{hong2016robust, honguniformly}.  
Here we briefly present some important results:
\begin{itemize}
\item $\|\cdot\|_{DG}$, $\|\cdot\|_h$, and $\|\cdot\|_{1,h}$ are equivalent on $\bm U_h$; that is
$$
\|\bm{u}_h\|_{DG}\eqsim  \|\bm{u}_h\|_h\eqsim\|\bm u_h\|_{1,h},\,\mbox{for all}~\bm u_h \in \bm U_h.
$$
\item 
$a_h$ from~\eqref{78} is continuous and it holds true that
\begin{eqnarray}\label{continuity:a_h}
|a_h(\bm u,\bm w)|&\lesssim& \| \bm u  \|_{DG}  \| \bm w  \|_{DG},\quad\mbox{for all}\quad \bm u,~\bm w\in \bm H^2(\mathcal{T}_h).
\end{eqnarray}
\item The following inf-sup conditions are satisfied
\begin{equation}\label{inf-sup}
\begin{split}
 & \inf_{(q_{1,h},\cdots,q_{n,h})\in P_{1,h} \times \cdots \times P_{n,h}}\sup_{\bm{u}_h\in \bm{U}_h}\frac{(\operatorname{div}\bm{u}_h,\sum\limits_{i=1}^n 
 q_{i,h})}{\|\bm{u}_h\|_{1,h}\|\sum\limits_{i=1}^n q_{i,h}\|}\geq \beta_{sd},\\
 &  \inf_{q_{i,h}\in P_{i,h}}\sup_{\bm{v}_{i,h}\in \bm{V}_{i,h}}\frac{(\operatorname{div}\bm{v}_{i,h},q_{i,h})}{\|\bm{v}_{i,h}\|_{1,h}\|q_{i,h}\|}\geq \beta_{dd}, \quad i=1,\dots,n.
\end{split}
\end{equation}

\end{itemize}
Using the definition of the matrices $\C_1$ and $\C_2$, next we define the bilinear form 
\begin{equation}\label{eq:79}
\begin{split}
&\mathcal A_h((\bu_h, \bv_{h}, \bm p_{h}),(\bw_h, \bz_{h}, \bm q_{h}))=a_h(\bm u_h,\bm w_h)+\lambda ( \div \boldsymbol  u_h, \div \boldsymbol  w_h) 
\\& +\sum_{i=1}^{n} (\alpha_i p_{i,h},{\rm div} \bw_h) +\sum_{i=1}^{n}\gamma_i a_h(\bm v_{i,h},\bm z_{i,h})\\ & +\sum_{i=1}^{n}(\tau K^{-1}_i\bv_{i,h},\bz_{i,h})
+ \tau (\bp_h, \Divv \bz_h)
\\&
+\sum_{i=1}^{n}(\divv\bu_h, \alpha_i q_{i,h})+\tau (\Divv \bv_h,\bq_h) - ((\C_1+\C_2)\bp_h,\bq_h)
\end{split}
\end{equation}
related to problem~\eqref{eq:75}--\eqref{eq:77}.

We equip $\bm X_h$ with the norm defined by $\|(\cdot,\cdot,\cdot)\|_{\bm X_h}^2:=\|\bm \cdot \|_{\bm U_h}^2+\|\bm \cdot \|_{\bm V_h}^2
+\|\bm \cdot \|_{\bm P}^2$. Similar to \cref{continu_stability}, 
the following uniform stability result holds:
\begin{theorem}\label{boundd}
~
\begin{itemize}
\item[(i)] For any $\bm u_h, \bm w_h\in \bm U_h; \, \bm v_h, \bm z_h\in \bm V_h; \, \bm p_h, \bm q_h\in \bm P_h$
there exists a positive constant $C_{bd}$ independent of all model parameters, 
the network scale~$n$ and the mesh size~$h$ such that the inequality 
\begin{equation*}
|\mathcal A_h((\bu_h,\bv_h,\bm p_h),(\bw_h,\bz_h,\bm q_h))|\le C_{bd} \|(\bm u_h,\bm v_h, \bm p_h)\|_{\bm X_h}  
\|(\bm w_h, \bm z_h, \bm q_h)\|_{\bm X_h}
\end{equation*}
holds true.
\item[(ii)]  There exists a constant $\omega_d>0$ independent of all discretization and model parameters 
such that 
\begin{equation}
\displaystyle\inf_{(\boldsymbol u_h,  \boldsymbol v_h, \bm p_h)\in  \bm X_h} 
\sup_{(\boldsymbol w_h,\boldsymbol z_h, \bm q_h)\in \boldsymbol X_h}\frac{\mathcal A_h((\bu_h,\bv_{h},\bm p_{h}),(\bw_h,\bz_{h},\bm q_{h}))}
{\|(\bm u_h,\bm v_h, \bm p_h)\|_{\bm X_h}  
\|(\bm w_h, \bm z_h, \bm q_h)\|_{\bm X_h}
}\geq \omega_d.
\end{equation}

\item[(iii)] Let $(\boldsymbol u_h,  \boldsymbol v_h, \bm p_h)\in \bm X_h$ solve~\eqref{eq:75}-\eqref{eq:77}
and
$$\|\boldsymbol f\|_{\boldsymbol U_h'}=
\sup\limits_{\boldsymbol w_h\in \boldsymbol U_h}\frac{(\boldsymbol f, \boldsymbol w_h)}
{\|\boldsymbol w_h\|_{\boldsymbol U_h}},\quad \|\bm g\|_{\bm P'}=
\sup\limits_{\bm q_h\in \bm P_h}\frac{(\bm g, \bm q_h)}{\|\bm q_h\|_{\bm P}}.$$
Then the estimate
\begin{equation*}
\|\boldsymbol u_h\|_{\boldsymbol U_h}+\|\boldsymbol  v_h\|_{\boldsymbol V}+\|\bm p_h\|_{\bm P}\leq C_2 (\|\boldsymbol f\|_{\boldsymbol U_h'}+\|\bm g\|_{\bm P'})
\end{equation*} 
holds with 
a constant $C_2$ independent of the network scale $n$, the mesh size~$h$, the time step $\tau$ and the parameters 
$\lambda$, $K_i^{-1}$, $s_i$, ${\beta}_{ij}$, $i,j \in \{1,\dots,n\}$.
\end{itemize}
\end{theorem}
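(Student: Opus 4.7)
The plan is to mirror the continuous analysis of Theorem~\ref{continu_stability} at the discrete level, recasting the three-field problem as a two-by-two perturbed saddle-point system and invoking the abstract framework of Theorem~5 in~\cite{hong2021framework}. Continuity (i) follows by applying Cauchy--Schwarz termwise to each bilinear form appearing in $\mathcal{A}_h$: for the DG contribution I would invoke~\eqref{continuity:a_h}, while the coupling terms $(\div \bu_h, \balpha \cdot \bp_h)$ and $\tau(\Divv \bv_h, \bq_h)$ are absorbed into the $\bm X_h$-norm by observing that $\|\bp_h\|_{\bm P}^2$ dominates $((\Lambda_3+\Lambda_4)\bp_h, \bp_h)$ and that $\|\bv_h\|_{\bm V_h}^2$ dominates $(\Lambda^{-1}\tau^2 \Divv \bv_h, \Divv \bv_h)$, exactly as in the continuous case.

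For (ii), following Theorem~\ref{continu_stability_in_combined_norm}, I would group $\bar{\bu}_h = (\bu_h, \bv_h)$ and rewrite $\mathcal{A}_h$ as a perturbed two-by-two saddle-point form with the diagonal block
\begin{equation*}
a(\bar{\bu}_h, \bar{\bw}_h) = a_h(\bu_h, \bw_h) + \lambda(\div \bu_h, \div \bw_h) + \ssum_{i=1}^n \bigl[\gamma_i a_h(\bv_{i,h}, \bz_{i,h}) + (\tau K_i^{-1} \bv_{i,h}, \bz_{i,h})\bigr],
\end{equation*}
and verify the two hypotheses of Theorem~5 in~\cite{hong2021framework} at the discrete level. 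The coercivity hypothesis holds by construction of the seminorm together with the DG coercivity of the elasticity and Brinkman-type bilinear forms reviewed in Section~\ref{subsec:discrete_stability}. For the discrete inf-sup hypothesis, given $\bp_h \in \bm P_h$ I would use~\eqref{inf-sup} to select $\bw_h \in \bm U_h$ and $\bz_{i,h} \in \bm V_{i,h}$ satisfying
\begin{equation*}
-\div \bw_h = \tfrac{1}{2\mu+\lambda} \ssum_{i=1}^n \alpha_i p_{i,h}, \qquad -\div \bz_{i,h} = \tau R\, p_{i,h},
\end{equation*}
with $\|\bw_h\|_{1,h}$ and $\|\bz_{i,h}\|_{1,h}$ controlled by $\beta_{sd}^{-1}$ and $\beta_{dd}^{-1}$ times the appropriate norms of the pressure data. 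Using the equivalence $\|\cdot\|_{DG}\eqsim \|\cdot\|_{1,h}$ on $\bm U_h$ and $\bm V_{i,h}$, the same chain of inequalities as in the continuous proof delivers the parameter-independent discrete inf-sup bound $b(\bar{\bw}_h,\bp_h)\ge c_{b,d}\,|\bp_h|_{\bm Q}\,\|\bar{\bw}_h\|_{\bar{\bm V}_h}$.

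To pass from the combined fitted norm supplied by the framework to the $\|\cdot\|_{\bm X_h}$-norm appearing in the statement, I would reproduce the argument of Lemma~\ref{norm_equivalence} at the discrete level: it invokes only Cauchy's inequality and Lemma~\ref{etauu:7}, both of which are insensitive to the $H^1$-versus-DG distinction, so the argument transfers verbatim once $\|\cdot\|_{\bm U_h}$ and $\|\cdot\|_{\bm V_h}$ are read off from~\eqref{U_hnorm}--\eqref{V_hnorm}. Assertion (iii) is then a direct consequence of (i) and (ii) via the Banach--Ne\v{c}as--Babu\v{s}ka theorem, together with identification of the dual norms of the data.

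The hardest part will not be any single estimate but rather the structural bookkeeping required to confirm that the discrete test function $\bw_h$ produced through~\eqref{inf-sup} simultaneously respects the $H(\div,\Omega)$-conformity, the zero-normal-trace boundary condition, and --- through the commuting-diagram property $\div \bm U_h = P_h$ and $\div \bm V_{i,h} = P_{i,h}$ built into the BDM$_l$/P$_{l-1}$ pairing --- yields divergences that match the prescribed right-hand sides exactly, rather than merely up to a projection error. Once this is in place, the resulting constants $C_{bd}$ and $\omega_d$ depend only on $\beta_{sd}$, $\beta_{dd}$ and the DG stabilization constant $\eta$, and hence are independent of $\lambda$, $K_i^{-1}$, $s_i$, $\beta_{ij}$, the network scale $n$, the time step $\tau$ and the mesh size $h$.
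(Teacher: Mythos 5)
Your proposal is correct and follows exactly the route the paper intends: the paper gives no written proof of \cref{boundd}, stating only that it holds ``similar to \cref{continu_stability}'', i.e.\ by repeating the perturbed saddle-point argument of \cref{continu_stability_in_combined_norm} and \cref{norm_equivalence} with the discrete inf-sup conditions \eqref{inf-sup}, the DG norm equivalences, and the surjectivity $\div\,\bm U_h = P_h$, $\div\,\bm V_{i,h} = P_{i,h}$ of the BDM$_l$/P$_{l-1}$ pairing in place of their continuous counterparts. Your elaboration, including the observation that the chosen discrete test functions must realize the prescribed divergences exactly rather than up to projection, supplies precisely the details the paper omits.
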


\subsection{Error estimates}\label{sec:error_estimates}

This subsection summarizes the error estimates that follow from the stability results presented in \Cref{subsec:discrete_stability}.
\begin{theorem}\label{error0}
Assume that $(\boldsymbol u,\boldsymbol v,\boldsymbol p)\in \bm U\cap \bm H^2(\mathcal T_h)\times \bm V\cap \bm H^2(\mathcal T_h)\times \bm P $ is the unique solution of~\eqref{eq:abstract:forms}--\eqref{eq:forms}, and let
$(\boldsymbol u_h,\boldsymbol v_h,\boldsymbol p_h)$ be the solution of \eqref{eq:75-77}. Then
the error estimates
\begin{equation} \label{eq:erroruv}
\|\boldsymbol u-\boldsymbol u_h\|_{\boldsymbol U_h}
+\|\boldsymbol v-\boldsymbol v_h\|_{\boldsymbol V_h}
\lesssim  \inf\limits_{\boldsymbol w_h\in \boldsymbol U_h, \boldsymbol z_h\in \boldsymbol V_h}
\Big(\|\boldsymbol u-\boldsymbol w_h\|_{\boldsymbol U_h}+\|\boldsymbol v-\boldsymbol z_h\|_{\boldsymbol V_h}\Big),
\end{equation}
\begin{equation} \label{eq:errorp}
\|\boldsymbol p-\boldsymbol p_h\|_{\boldsymbol P}\lesssim  \inf\limits_{\boldsymbol w_h\in \boldsymbol U_h, \boldsymbol z_h\in \boldsymbol V_h, \boldsymbol q_h\in {\boldsymbol P_h}}
\Big(\|\boldsymbol u-\boldsymbol w_h\|_{\boldsymbol U_h}+\|\boldsymbol v-\boldsymbol z_h\|_{\boldsymbol V_h}
+\|\boldsymbol p-\boldsymbol q_h\|_{\boldsymbol P}\Big),
\end{equation}
hold true, where the inequality constants are independent of the parameters $\lambda,K_i^{-1}, s_i$, ${\beta}_{ij}$ for $ i,j=1,\dots,n$,
the network scale~$n$, the mesh size~$h$ and the time step $\tau$. 
\end{theorem}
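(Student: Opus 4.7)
The plan is to derive both estimates by a standard Strang/C\'ea-type argument, combining three ingredients already established in the paper: consistency of the discretization with the continuous problem, the discrete inf-sup stability of \Cref{boundd}(ii), and the continuity of \Cref{boundd}(i).

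First I would verify consistency and Galerkin orthogonality. Since the exact solution satisfies $\boldsymbol u \in \boldsymbol U$ and $\boldsymbol v_i \in \boldsymbol V_i$, in particular $\boldsymbol u, \boldsymbol v_i \in H^1(\Omega)^d$ with the specified boundary conditions, the tangential jumps $[\boldsymbol u_t]$ and $[\boldsymbol v_{i,t}]$ vanish on every interior face, so all jump/penalty contributions in $a_h(\boldsymbol u, \cdot)$ and $a_h(\boldsymbol v_i, \cdot)$ drop out and the bilinear form $\mathcal A_h$ applied to the exact solution agrees with the continuous weak form tested against any discrete function. Thus $(\boldsymbol u,\boldsymbol v,\boldsymbol p)$ itself satisfies~\eqref{eq:75-77}, and subtracting the discrete equations yields
\begin{equation*}
\mathcal A_h\bigl((\boldsymbol u-\boldsymbol u_h,\boldsymbol v-\boldsymbol v_h,\boldsymbol p-\boldsymbol p_h),(\boldsymbol w_h,\boldsymbol z_h,\boldsymbol q_h)\bigr)=0\qquad \forall (\boldsymbol w_h,\boldsymbol z_h,\boldsymbol q_h)\in \boldsymbol X_h.
\end{equation*}

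Given this orthogonality, I would obtain the pressure-including estimate \eqref{eq:errorp} first, since it is the textbook consequence of inf-sup stability and continuity. For an arbitrary triple $(\boldsymbol w_h,\boldsymbol z_h,\boldsymbol q_h)\in\boldsymbol X_h$, \Cref{boundd}(ii) gives
\begin{equation*}
\omega_d\,\|(\boldsymbol u_h-\boldsymbol w_h,\boldsymbol v_h-\boldsymbol z_h,\boldsymbol p_h-\boldsymbol q_h)\|_{\boldsymbol X_h}
\le \sup_{(\boldsymbol \phi,\boldsymbol \psi,\boldsymbol \chi)\in\boldsymbol X_h}
\frac{\mathcal A_h\bigl((\boldsymbol u_h-\boldsymbol w_h,\boldsymbol v_h-\boldsymbol z_h,\boldsymbol p_h-\boldsymbol q_h),(\boldsymbol \phi,\boldsymbol \psi,\boldsymbol \chi)\bigr)}{\|(\boldsymbol \phi,\boldsymbol \psi,\boldsymbol \chi)\|_{\boldsymbol X_h}}.
\end{equation*}
Rewriting the numerator via Galerkin orthogonality as $\mathcal A_h((\boldsymbol u-\boldsymbol w_h,\boldsymbol v-\boldsymbol z_h,\boldsymbol p-\boldsymbol q_h),\cdot)$ and applying \Cref{boundd}(i) bounds it by $C_{bd}\,\|(\boldsymbol u-\boldsymbol w_h,\boldsymbol v-\boldsymbol z_h,\boldsymbol p-\boldsymbol q_h)\|_{\boldsymbol X_h}$. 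A triangle inequality and taking the infimum over $(\boldsymbol w_h,\boldsymbol z_h,\boldsymbol q_h)$ then delivers~\eqref{eq:errorp}.

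For the sharper estimate \eqref{eq:erroruv}, which excludes $\|\boldsymbol p-\boldsymbol q_h\|_{\boldsymbol P}$, the plan is to exploit the divergence-compatibility of the BDM/piecewise-polynomial pair: since $\div \boldsymbol U_h\subseteq \boldsymbol P_h$ and $\div \boldsymbol V_{i,h}\subseteq P_{i,h}$, the canonical BDM interpolants $\Pi^{\boldsymbol U}_h$ and $\Pi^{\boldsymbol V_i}_h$ commute with divergence via the $L^2$-projection $\Pi^{0}_h$ onto the pressure space, i.e.\ $\div(\boldsymbol u-\Pi^{\boldsymbol U}_h \boldsymbol u)\perp \boldsymbol P_h$ and analogously for each $\boldsymbol v_i$. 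Selecting these as the approximants $\boldsymbol w_h$ and $\boldsymbol z_h$, all coupling terms of the form $(\alpha_i q_{i,h},\div(\boldsymbol u-\boldsymbol w_h))$ and $(\tau q_{i,h},\div(\boldsymbol v_i-\boldsymbol z_{i,h}))$ vanish when tested against discrete pressures $\boldsymbol q_h^{\star}$, so the pressure approximation error drops out of the right-hand side of the Strang argument; what survives is controlled by $\|\boldsymbol u-\boldsymbol w_h\|_{\boldsymbol U_h}+\|\boldsymbol v-\boldsymbol z_h\|_{\boldsymbol V_h}$. Noting that the BDM interpolant is optimal also in the $\lambda\,\|\div\cdot\|$-weighted and $\Lambda^{-1}\tau^2\,\mathrm{Div}$-weighted components of these norms, the infimum formulation follows by a final triangle inequality.

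The main obstacle will be the last step: carefully verifying that the commuting-diagram property of the BDM interpolants remains compatible with the parameter-weighted $\|\cdot\|_{\boldsymbol U_h}$ and $\|\cdot\|_{\boldsymbol V_h}$ norms, in particular the $\Lambda^{-1}$-weighted divergence block in $\|\cdot\|_{\boldsymbol V_h}$ and the $\lambda$-weighted divergence in $\|\cdot\|_{\boldsymbol U_h}$, so that no hidden dependence on $\lambda$, $K_i^{-1}$, $s_i$, $\beta_{ij}$, or $n$ re-enters through the interpolation bounds.
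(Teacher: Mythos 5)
Your proposal is correct and follows essentially the same route as the paper, which simply defers to the analogous Theorem 5.2 of Hong and Kraus: Galerkin orthogonality together with the discrete inf-sup stability and continuity of \cref{boundd} yields the quasi-optimal estimate \eqref{eq:errorp}, while the commuting-diagram property of the canonical BDM interpolants and the inclusions $\div \bm U_h \subseteq \bm P_h$, $\div \bm V_{i,h} \subseteq P_{i,h}$ decouple the displacement/flux error from the pressure error in \eqref{eq:erroruv}. The one detail worth making explicit in that second step is that the discrete pressure comparison function must be chosen as the elementwise $L^2$-projection of $\bm p$, so that not only the coupling terms $b_1(\cdot,\bm p-\bm q_h)$ and $b_2(\cdot,\bm p-\bm q_h)$ but also $a_3(\bm p-\bm q_h,\cdot)$ vanishes against discrete test functions.
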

\begin{proof}
The proof of this result is analogous to the proof of Theorem 5.2 in~\cite{HongKraus2017parameter}.
\end{proof}

\begin{remark}
In particular, the above theorem shows that the proposed discretizations are locking-free.
Note that estimate~\eqref{eq:erroruv} controls the error in $\bm u$
plus the error in $\bm v$ by the sum of the errors of the corresponding best approximations whereas estimate~\eqref{eq:errorp}
requires the best approximation errors of all three vector variables $\bm u$, $\bm v$ and $\bm p$ to control the error in $\bm p$.
\end{remark}

\subsection{A norm equivalent preconditioner}

We consider the following block-diagonal operator
\begin{equation}\label{Preconditioner:B}
\mathcal{B}:=\left[\begin{array}{ccc}
\mathcal{B}_{\bm u} & \bm 0 & \bm 0 \\
\bm 0 & \mathcal{B}_{\bm v}& \bm 0 \\
\bm 0& \bm 0& \mathcal{B}_{\bm p}  
\end{array}
\right]^{-1},
\end{equation}
where 
$$
\mathcal{B}_{\bm u}
=-\divv \ep-\lambda \nabla \divv,
$$
\begin{align*}
\mathcal{B}_{\bm v}=&  
\begin{bmatrix}
- \gamma_1{\text{div\,}} {\boldmath \epsilon} +\tau K_1^{-1}I&0  & \dots &0  \\
0 &- \gamma_2{\text{div\,}} {\boldmath \epsilon} +\tau K_2^{-1} I& \dots &0 \\
\vdots &\vdots  & \ddots & \vdots  \\
0& 0 & \dots &- \gamma_n{\text{div\,}} {\boldmath \epsilon} +\tau K_n^{-1}I
\end{bmatrix} \\ 
&
-
\begin{bmatrix}
\tilde{\Lambda}_{11}\nabla{\rm div} &\tilde{\Lambda}_{12}\nabla{\rm div} & \dots &\tilde{\Lambda}_{1n}\nabla{\rm div}\\
\tilde{\Lambda}_{21}\nabla{\rm div} &\tilde{\Lambda}_{22}\nabla{\rm div} & \dots &\tilde{\Lambda}_{2n}\nabla{\rm div}\\
\vdots &\vdots  & \ddots & \vdots  \\
\tilde{\Lambda}_{n1}\nabla{\rm div} &\tilde{\Lambda}_{n2}\nabla{\rm div} & \dots &\tilde{\Lambda}_{nn}\nabla{\rm div}
\end{bmatrix}
\end{align*}
and
\begin{align*}
\mathcal{B}_{\bm p}&=
\begin{bmatrix}
\Lambda_{11}I&\Lambda_{12}I& \dots &\Lambda_{1n}I\\
\Lambda_{21}I&\Lambda_{22}I& \dots &\Lambda_{2n}I\\
\vdots&\vdots  &\ddots&\vdots\\
\Lambda_{n1}I&\Lambda_{n2}I& \dots &\Lambda_{nn}I\\
\end{bmatrix}.
\end{align*}
Here, $\Lambda_{ij}$, $\tilde{\Lambda}_{ij}$, $i,j=1,\ldots,n$ are the entries of $\Lambda$ and $\Lambda^{-1}$, respectively.

As substantiated in~\cite{hong2019conservative}, the stability results
for the operator $\mathcal{A}$ in \eqref{eq:generalized_BB} imply that
the operator~$\mathcal{B}$ is a uniform norm-equivalent (canonical)
block-diagonal preconditioner that is robust with respect to all model
and discretization parameters. Note that $\mathcal{B}$ defines a
canonical uniform block-diagonal preconditioner on the continuous as
well as on the discrete level as long as discrete inf-sup conditions
analogous to~\eqref{eq:inf_sup_d} and~\eqref{eq:inf_sup_s} are
satisfied, cf.~\cite{hong2019conservative}.


\section{Numerical experiments}
\label{sec:numerics}
In this section we present numerical experiments whose results corroborate
stability properties of the finite element discretization of the generalized
Biot-Brinkman model (see \Cref{sec:error_estimates}) and the preconditioner
\eqref{Preconditioner:B}. We shall first demonstrate parameter robustness of the exact
preconditioner through a sensitivity study of the conditioning of the
preconditioned Biot-Brinkman system. Afterwards, scalable realization of
the preconditioner in terms multilevel methods for the displacement and
flux blocks is discussed. For simplicity, all the experiments concern the domain
$\Omega=(0, 1)^2$. The implementation was carried in the Firedrake finite element
framework \cite{firedrake}.

\subsection{Error estimates} We consider a single network, $n=1$, case
of the generalized Biot-Brinkman model \eqref{eq:generalized_BB},
with parameters $\mu=1$, $\tau=10^{-1}$, $\alpha_1=10^{-3}$ and $c_1=10^{-2}$
fixed (arbitrarily) while $K_1$, $\nu_1$ and $\lambda$ shall be varied in
order to test robustness of the error estimates established in \Cref{sec:error_estimates}.
To this end, we solve \eqref{eq:bb:matrix} with the right hand side
computed based on the exact solution
\begin{equation}\label{eq:mms}
\bu = \left(\frac{\partial \phi}{\partial y}, -\frac{\partial \phi}{\partial x}\right),\quad
\bv_1 = \nabla \phi_1,\quad
p_1 = \sin \pi(x-y),
\end{equation}
where
\[
    \phi = x^2 (x-1)^2 y^2 (y-1)^2, \quad
    \phi_1 = x^4 (x-1)^4 y^4 (y-1)^4.
\]
It can be seen that the manufactured solution satisfies the homogeneous conditions
$\bu|_{\partial\Omega}=\boldsymbol{0}$, $\bv_1\cdot\bn |_{\partial\Omega}=0$ for $\Omega=(0, 1)^2$.

Using discretization by $\text{BDM}_1$ elements for $\bU_h$, $\bV_{1, h}$ and piece-wise constant elements for the pressure space $P_{1, h}$, \cref{fig:error_varLambda}--\cref{fig:error_varK} show the errors of the numerical approximations in the parameter-dependent norms \eqref{U_hnorm}, \eqref{V_hnorm} and $\lVert\cdot \rVert_{\bP}$ defined in \eqref{norms-p} when one of the parameters $\lambda$, $K_1$ and $\nu_1$ is varied. In all the cases the expected linear convergence can be observed. In particular, the rate is independent of the parameter variations. We note that the error here is computed on a finer mesh than the finite element solution in order to prevent aliasing.

\begin{figure}
  \centering
  \includegraphics[height=0.1555\textheight]{{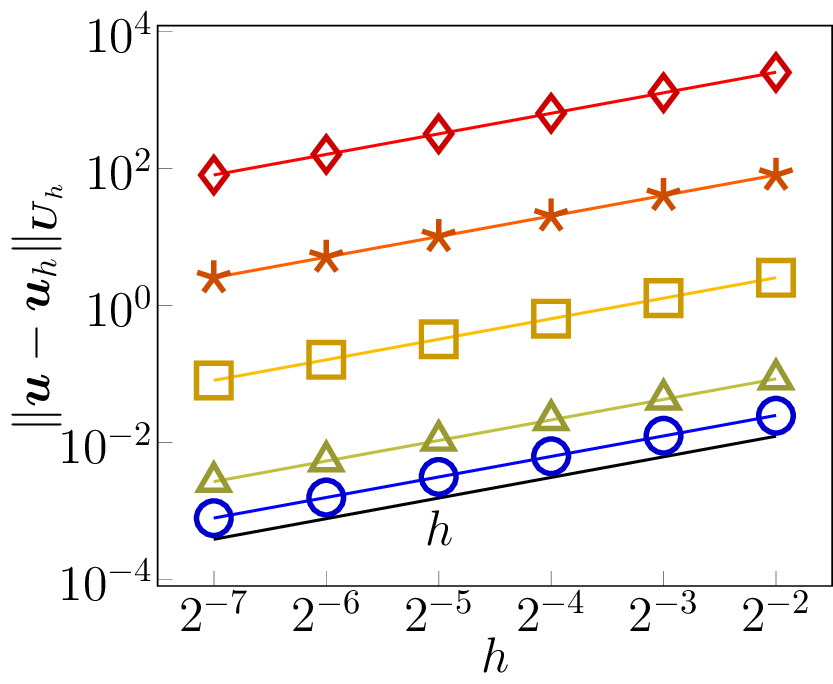}}
  \includegraphics[height=0.15\textheight]{{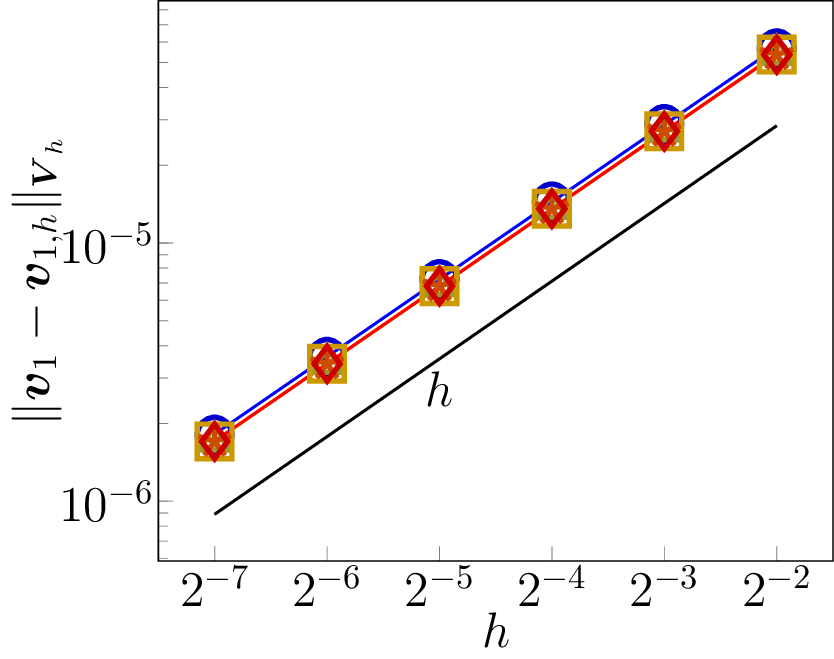}}
  \includegraphics[height=0.1555\textheight]{{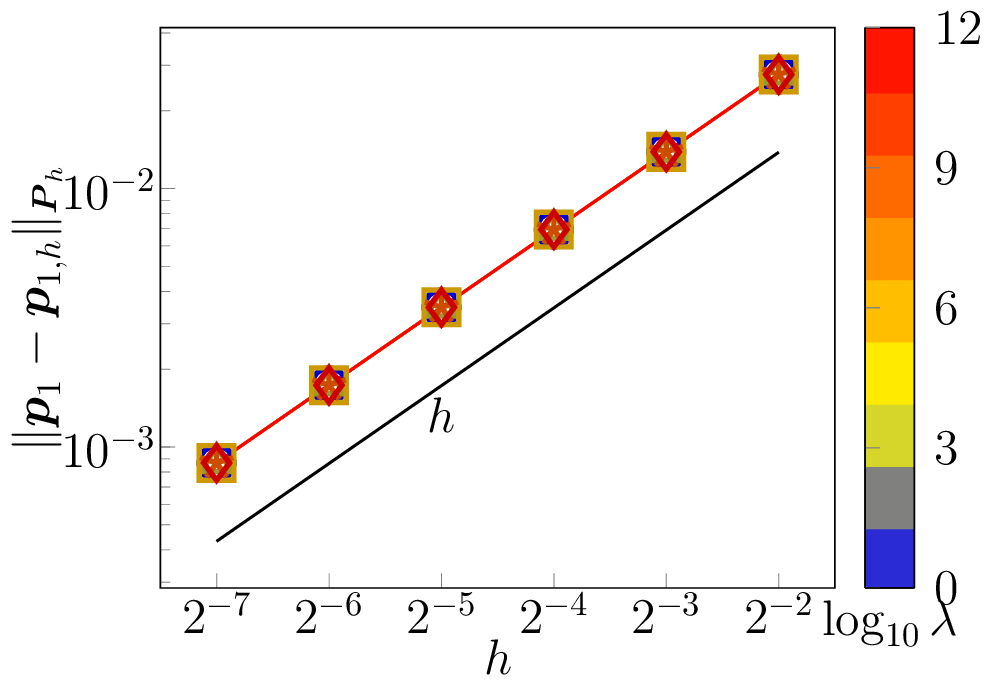}}
  \vspace{-10pt}
  \caption{
    Error approximation of the $\text{BDM}_1$-$\text{BDM}_1$-$\text{P}_0$ discretization
    of the single network Biot-Brinkman model. Parameters $\mu=1$, $\tau=10^{-1}$, $\alpha_1=10^{-3}$, $c_1=10^{-2}$,
    $\nu_1=1$ and $K_1=1$ are fixed. Line colors correspond to different values of $\lambda$. 
  }
  \label{fig:error_varLambda}
\end{figure}

\begin{figure}
  \centering
  \includegraphics[height=0.15\textheight]{{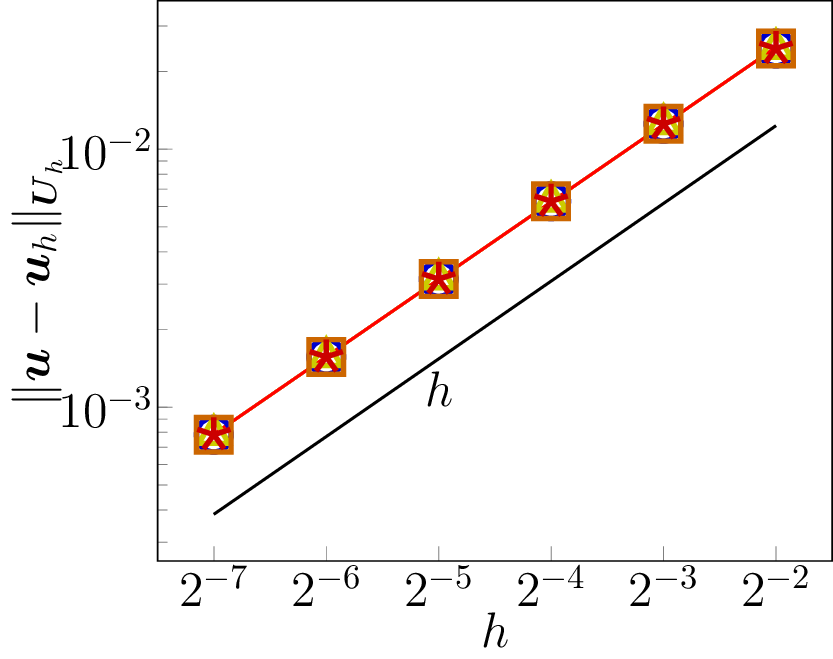}}
  \includegraphics[height=0.15\textheight]{{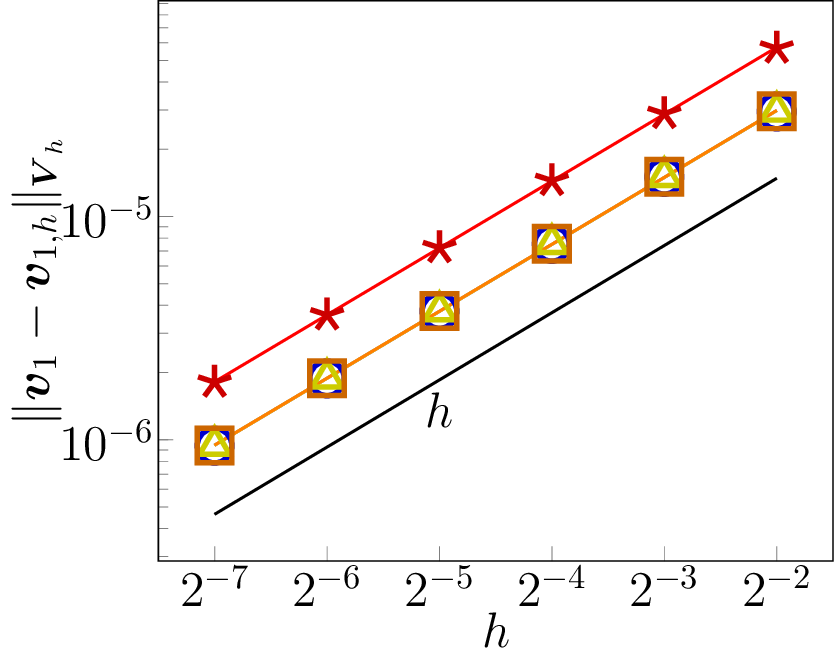}}
  \includegraphics[height=0.1555\textheight]{{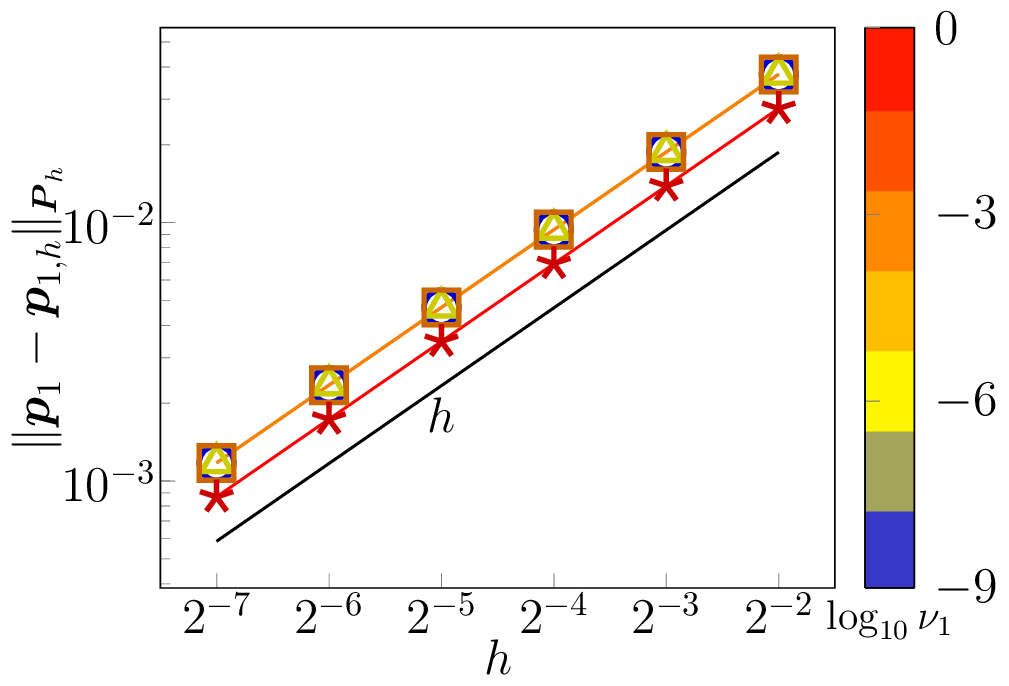}}
  \vspace{-20pt}
  \caption{
    Error approximation of the $\text{BDM}_1$-$\text{BDM}_1$-$\text{P}_0$ discretization
    of the single network Biot-Brinkman model. Parameters $\mu=1$, $\tau=10^{-1}$, $\alpha_1=10^{-3}$, $c_1=10^{-2}$,
	\kam{$K_1=1$} and $\lambda=1$ are fixed. Line colors correspond to different values of $\nu_1$. 
  }
  \label{fig:error_varNu}
\end{figure}

\begin{figure}
  \centering
  \includegraphics[height=0.15\textheight]{{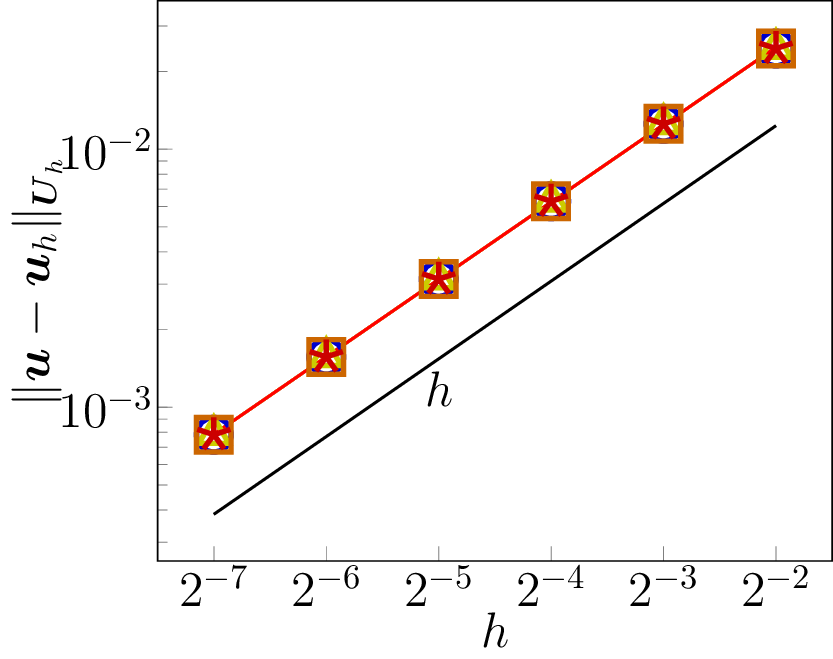}}
  \includegraphics[height=0.15\textheight]{{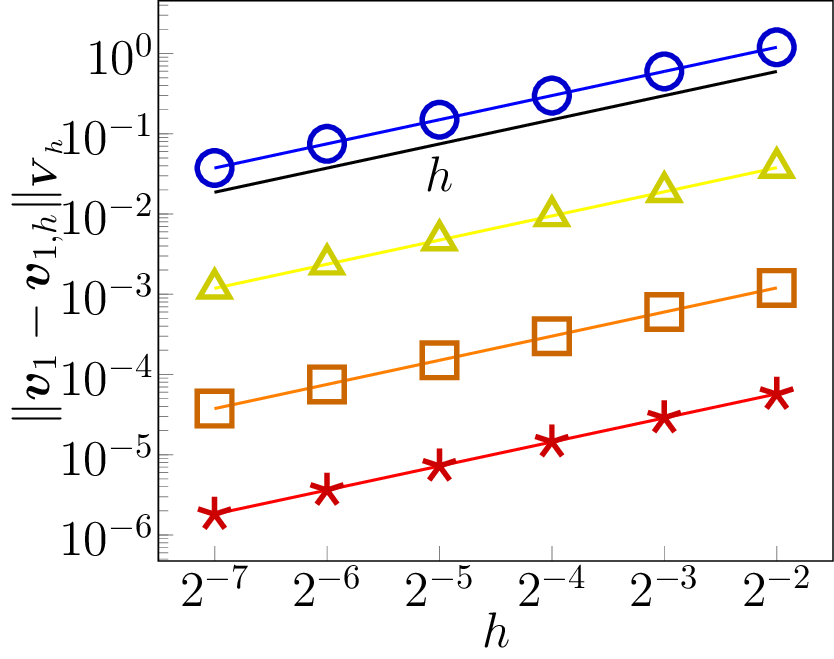}}
  \includegraphics[height=0.1555\textheight]{{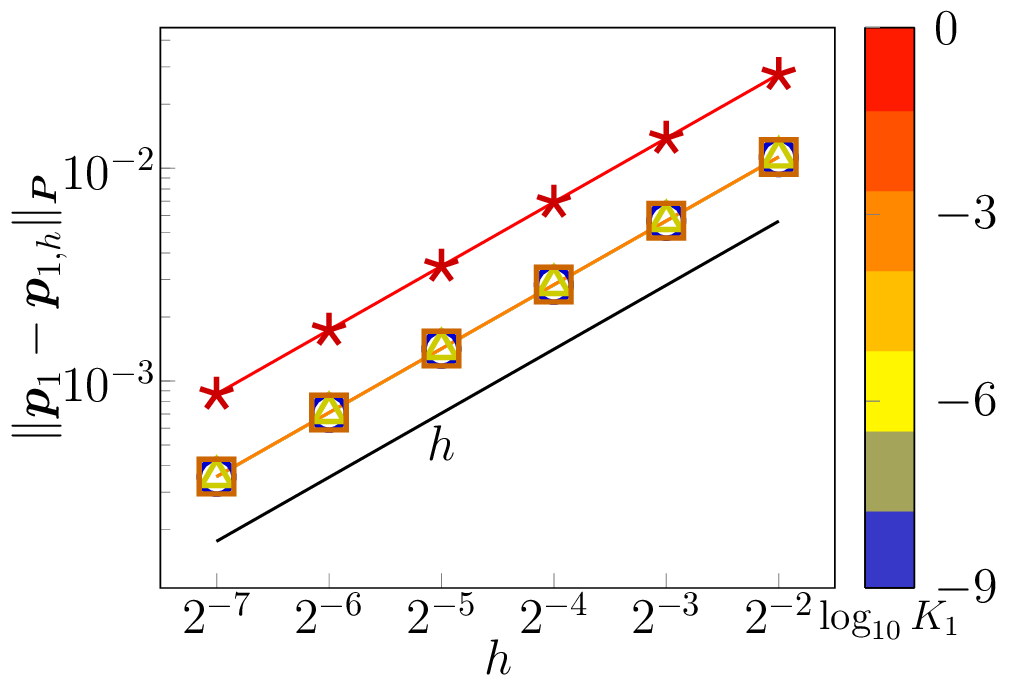}}
  \vspace{-20pt}  
  \caption{
    Error approximation of the $\text{BDM}_1$-$\text{BDM}_1$-$\text{P}_0$ discretization
    of the single network Biot-Brinkman model. Parameters $\mu=1$, $\tau=10^{-1}$, $\alpha_1=10^{-3}$, $c_1=10^{-2}$,
    $\nu_1=1$ and $\lambda=1$ are fixed. Line colors correspond to different values of $K_1$.     
  }
  \label{fig:error_varK}
\end{figure}

\subsection{Robustness of exact preconditioner}\label{sec:sensitivity} We verify robustness of the
canonical preconditioner \eqref{Preconditioner:B} using a generalized Biot-Brinkman system with two
networks. As the parameter space then counts 12 parameters in total we shall for simplicity fix material
properties of one of the networks (below we choose the network $i=1$)
to unity in addition to setting $\mu=1$, $\tau=1$. This choice leaves parameters
$\lambda$, $c_2$, $\alpha_2$, $\nu_2$, $K_2$ as well as the transfer coefficient
$\beta:=\beta_{12}$ to be varied. In the following experiments we let $1\leq\lambda\leq 10^{12}$,
$10^{-9}\leq \nu_2, K_2, \alpha_2 \leq 1$, $10^{-6}\leq \beta \leq 10^6$ and $c_2\in\left\{0, 1\right\}$
in order to perform a systematic sensitivity study. We note that we do not vary directly the scaling
parameters introduced in \eqref{eq:generalized_BB} but instead change the material parameters
in \eqref{eq:mBB:t}.

For the above choice of parameters the two-network problem is considered on the domain
$\Omega=(0, 1)^2$ with boundary conditions $\bu=\boldsymbol{0}$ on the left and right sides and
$\left(\sig+\boldsymbol{\alpha}\cdot\boldsymbol{p}\boldsymbol{I}\right)\cdot\bn=\boldsymbol{0}$
on the remaining part of the boundary; similarly, the Dirichlet conditions $\bv_i\cdot\bn=0$, $i=1, 2$
on the fluxes are prescribed only on the left and right sides.

Having constructed spaces $\bU_h$, $\bV_{1, h}$ $\bV_{2, h}$ with
$\text{BDM}_1$ elements and pressure spaces $P_{1, h}$ $P_{2, h}$ in
terms of piece-wise constants our results are summarized in
\cref{fig:a1E0}--\cref{fig:a1E-8} where slices of the explored
parameter space are shown. It can be seen that the condition numbers
remain bounded. Concretely, given discrete operators $\mathcal{A}_h$,
$\mathcal{B}_h$ that respectively discretize \eqref{eq:generalized_BB}
and the preconditioner \eqref{Preconditioner:B} the condition number
is computed based on the generalized eigenvalue problem $\mathcal{A}_h
x_k=\lambda_k \mathcal{B}^{-1}_h x_k$ as $\max_k \lvert \lambda_k
\rvert/\min_k \lvert \lambda_k \rvert.$ The higher condition numbers
(of about 8.5) are typically attained when $c_2=0$, $\lambda=1$ and
$\beta\ll 1$. We remark that with $c_2=0$ and all parameters but
$\beta$ set to 1 the condition number of $\Lambda$ ranges from $2.64$
when $\beta=10^{-6}$ to about $10^6$ when $\beta=10^{6}$. 

\begin{figure}
  \centering
  \includegraphics[width=\textwidth]{./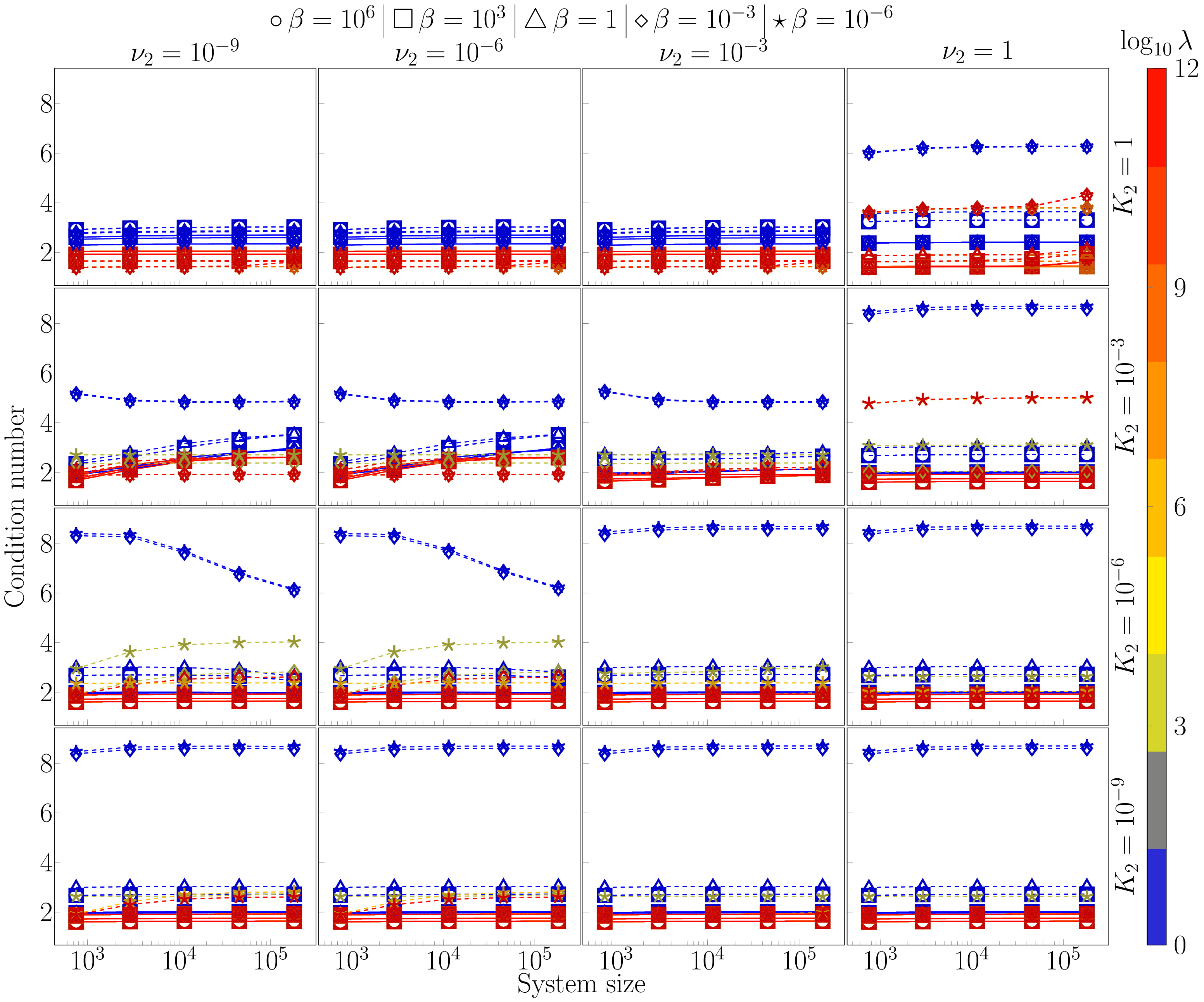}
  \vspace{-20pt}
  \caption{
    Performance of Biot-Brinkman preconditioner \eqref{Preconditioner:B} for $\alpha_2=1$ and
    varying parameters $\lambda$, $\nu_2$, $K_2$, $\beta$ (denoted by
    markers). Binary storage capacity is considered: $c_2=1$ (solid lines),
    $c_2=0$ (dashed lines). The remaining parameters are fixed at 1. Discretization
    by $\text{BDM}_1$-$(\text{BDM}_1)^2$-$(\text{P}_0)^2$ elements. Highest condition
    numbers correspond to $\beta\ll 1$ and $c_2=0$, $\lambda=1$.
  }
  \label{fig:a1E0}
\end{figure}

\begin{figure}
  \centering
  \includegraphics[width=\textwidth]{./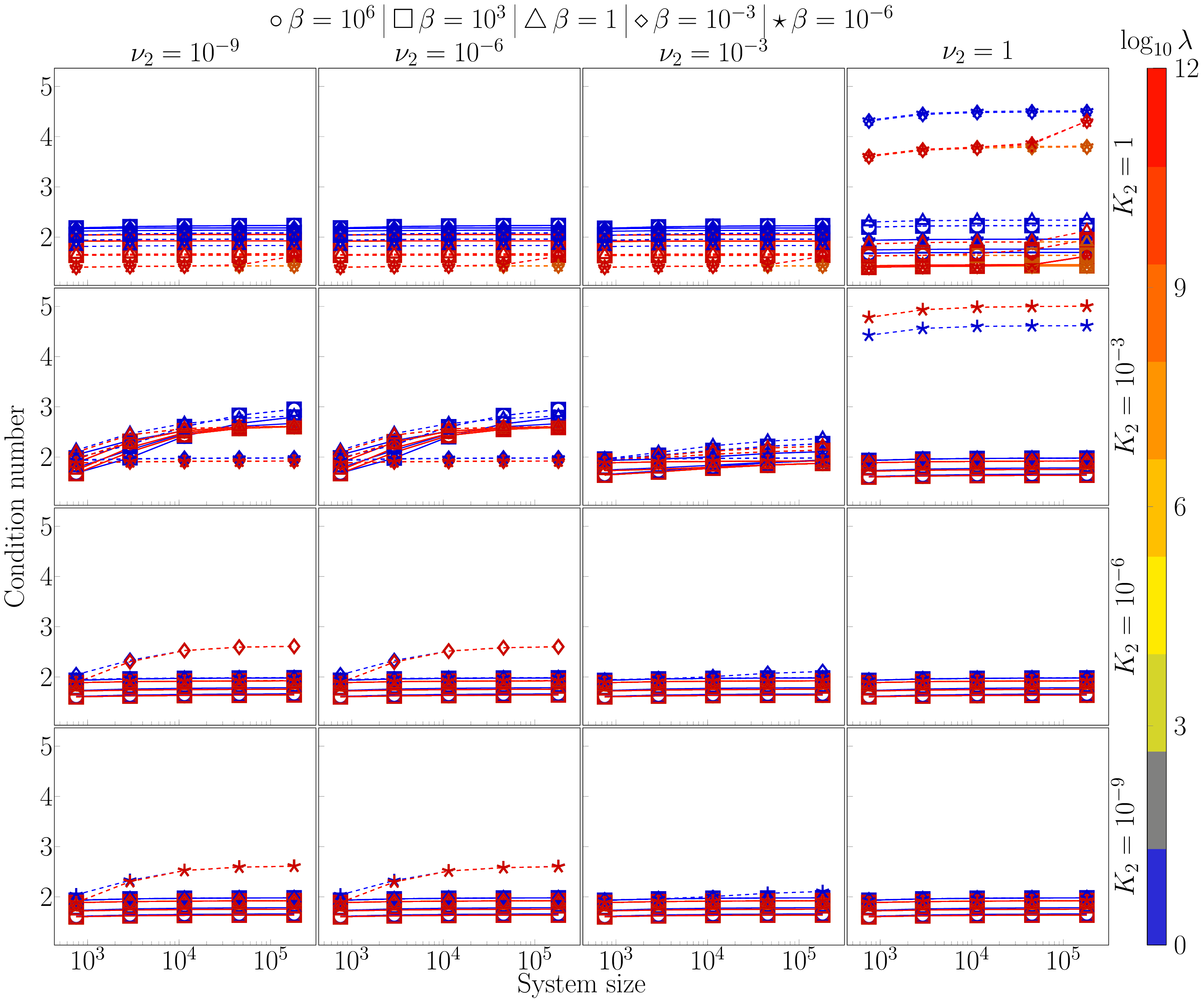}
  \vspace{-20pt}        
  \caption{
    Performance of Biot-Brinkman preconditioner \eqref{Preconditioner:B}
    for $\alpha_2=10^{-4}$ and varying parameters $\lambda$, $\nu_2$, $K_2$, $\beta$ (denoted by
    markers). Binary storage capacity is considered: $c_2=1$ (solid lines),
    $c_2=0$ (dashed lines). The remaining parameters are fixed at 1. Discretization
    by $\text{BDM}_1$-$(\text{BDM}_1)^2$-$(\text{P}_0)^2$ elements.
  }
  \label{fig:a1E-4}
\end{figure}

\begin{figure}
  \centering
  \includegraphics[width=\textwidth]{./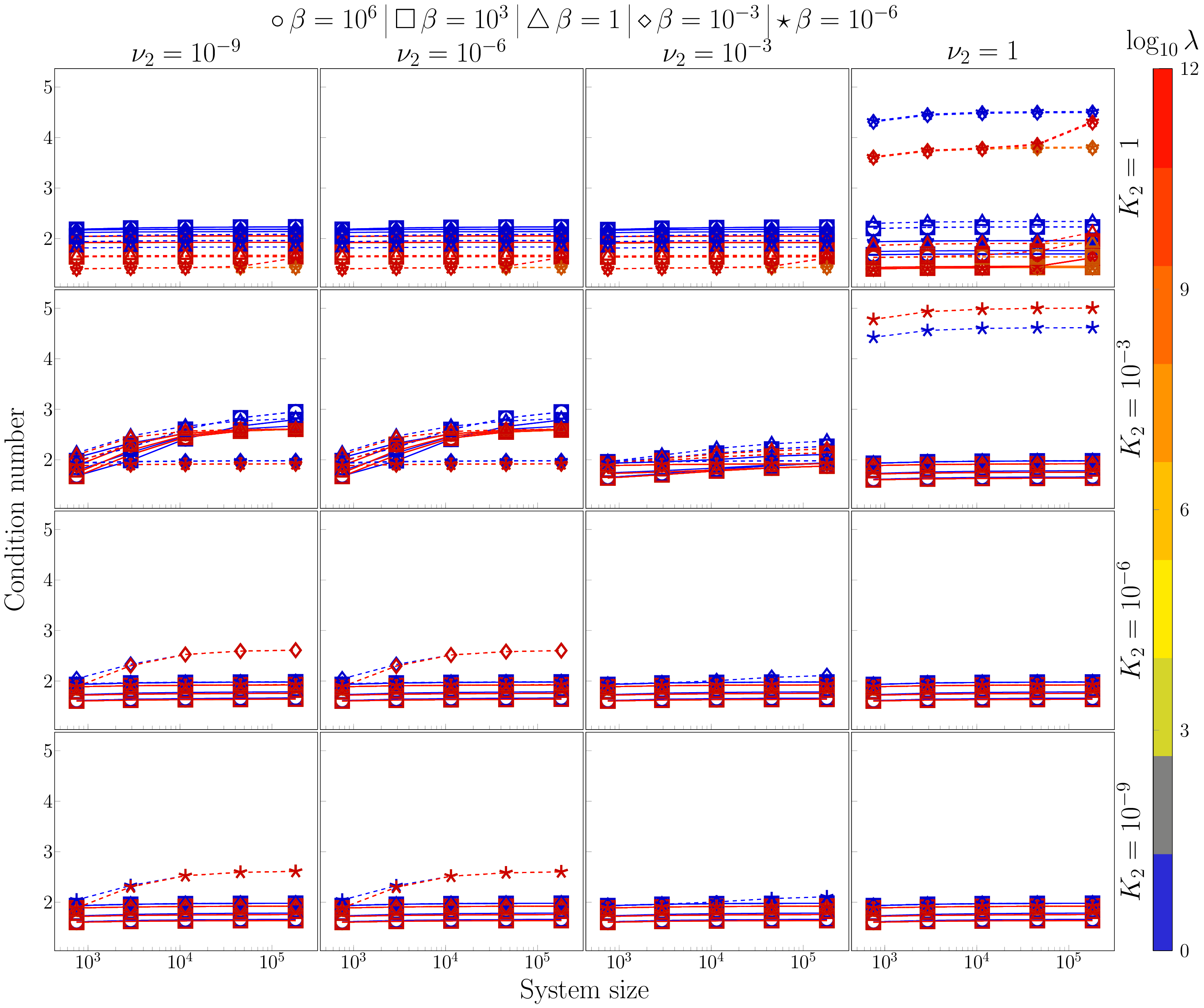}
  \vspace{-20pt}        
  \caption{
    Performance of Biot-Brinkman preconditioner \eqref{Preconditioner:B} for $\alpha_2=10^{-8}$ and
    varying parameters $\lambda$, $\nu_2$, $K_2$, $\beta$ (denoted by
    markers). Binary storage capacity is considered: $c_2=1$ (solid lines),
    $c_2=0$ (dashed lines). The remaining parameters are fixed at 1. Discretization
    by $\text{BDM}_1$-$(\text{BDM}_1)^2$-$(\text{P}_0)^2$ elements.
  }
  \label{fig:a1E-8}
\end{figure}

\subsection{Multigrid preconditioning}\label{sec:mg} Having seen that the exact
preconditioner \eqref{Preconditioner:B} yields parameter-robustness let us next
discuss possible construction of a scalable approximation of the operator $\mathcal{B}$.
Here, in order to approximate $\mathcal{B}_{\bm u}$ and $\mathcal{B}_{\bm v}$, 
we follow \cite{honguniformly, arnold2000multigrid, pcpatch} and employ
vertex-star relaxation schemes as part of geometric multigrid $F(2, 2)$-cycle
for the elastic block and $W(2, 2)$-cycle for the flux block. Numerical
experiments documenting robustness of the cycles for their respective blocks
are reported in Appendix \ref{sec:mg_components}.

To test performance of the multigrid-based preconditioner $\mathcal{B}$ 
we consider the two-network system from \Cref{sec:sensitivity} where
we set $c_2=0$, $\alpha_2=1$, $\beta\in\left\{10^{-6}, 10^{6}\right\}$ while
the remaining parameters are fixed to unity. We remark that for these parameter
values the highest condition numbers are attained with the exact preconditioner, cf.
\cref{fig:a1E0}. Furthermore, differing from the setup of the sensitivity
study, we (strongly) enforce $\bu\cdot\bn=0$ and $\bv_i\cdot\bn=0, i=1, 2$, on the entire boundary\footnote{
The reason for not prescribing the complete displacement vector as a boundary condition are limitations
in the PCPATCH framework which was used to implement the multigrid algorithm. In particular,
the software currently lacks support
for exterior facet integrals (see e.g. \cite{aznaran2021transformations})
which are required with BDM elements to weakly enforce conditions on the tangential
displacement by the Nitsche method.}. As before, the finite element discretization
is based on the $\text{BDM}_1$ and $\text{P}_0$ elements. 

In \cref{fig:flux_brinkman_beta1E6} and \cref{fig:flux_brinkman_beta1E-6} we report
the dependence on the mesh size and parameter values of the iteration counts of
the preconditioned MinRes solver where as the preconditioner both the exact Riesz map \eqref{Preconditioner:B}
and the multigrid-based approximation are used. More specifically, the multigrid cycles
for the displacement and flux blocks use 3 grid levels applying the exact $L^2$-projection
as the transfer operator. For both $\mathcal{B}_{\bm u}$ and $\mathcal{B}_{\bm v}$ the
vertex-star relaxation uses damped Richardson smoother. Comparing the results
we observe that the use of multigrid in \eqref{Preconditioner:B} translates to a
slight (about 1.5x) increase in the number of Krylov iterations compared to the
exact preconditioner. However, the iterations appear bounded in the mesh size 
and the parameter variations.

\begin{figure}
  \centering
  \includegraphics[width=\textwidth]{./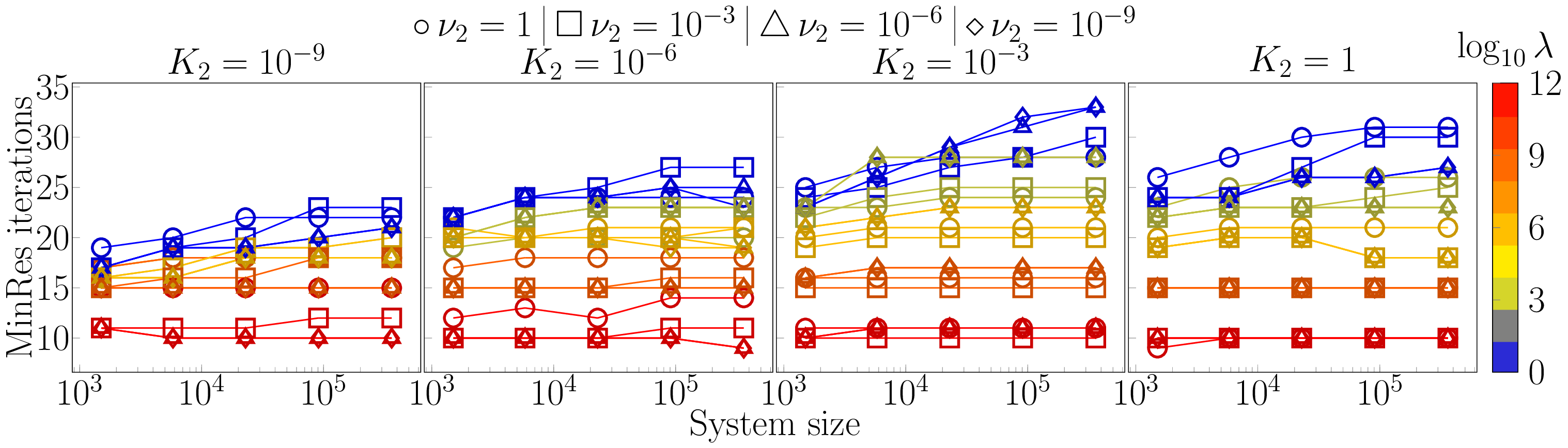}\\
  \includegraphics[width=\textwidth]{./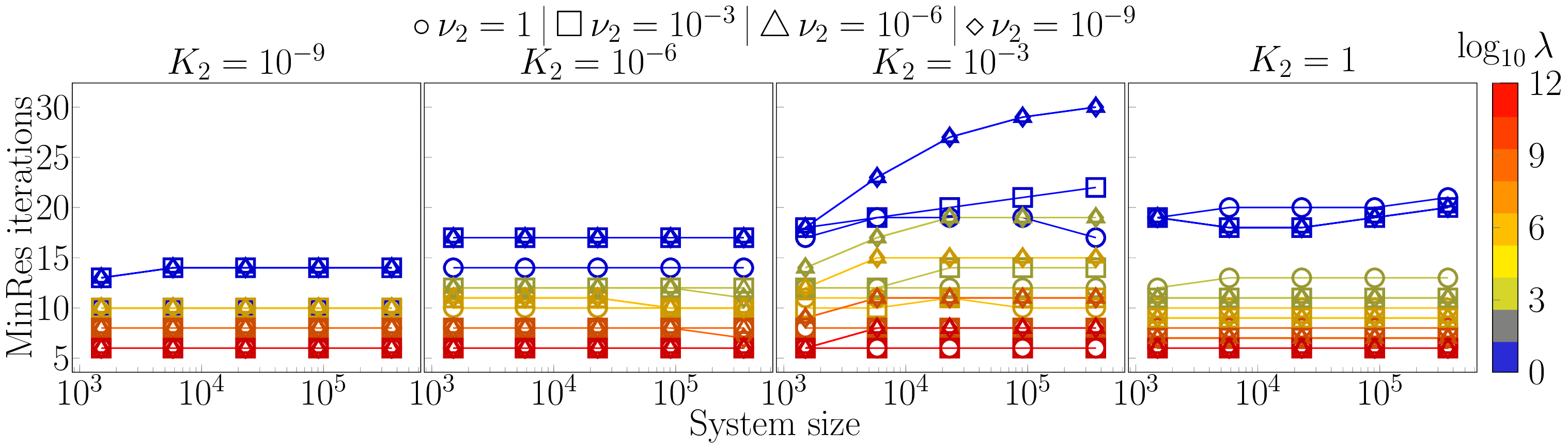}
  \vspace{-20pt}        
  \caption{Number of preconditioned MinRes iterations for 2-network Biot-Brinkman
    system with preconditioner \eqref{Preconditioner:B}.
    (Top) The displacement and flux blocks use realized by geometric multigrid while
    $\mathcal{B}_{\bm p}$ is computed by LU. (Bottom) Exact (LU-inverted) preconditioner
    is used. Transfer coefficient $\beta=10^{6}$, while $c_2=0$, $\alpha_2=1$ and the remaining
    problem parameters are set to 1.}
  \label{fig:flux_brinkman_beta1E6}
\end{figure}

\begin{figure}
  \centering
  \includegraphics[width=\textwidth]{./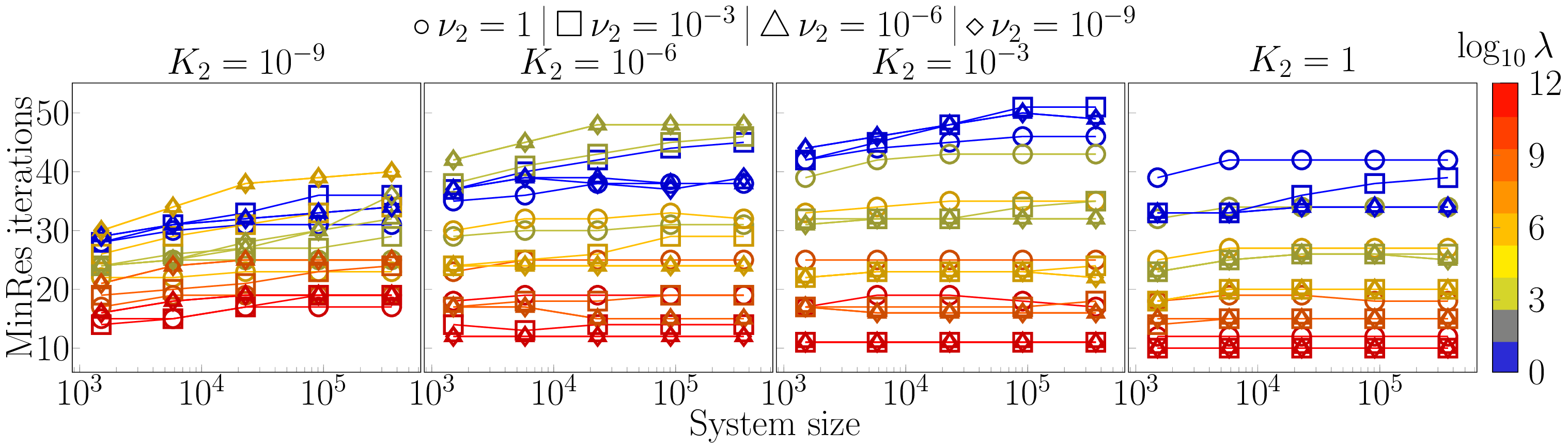}\\
  \includegraphics[width=\textwidth]{./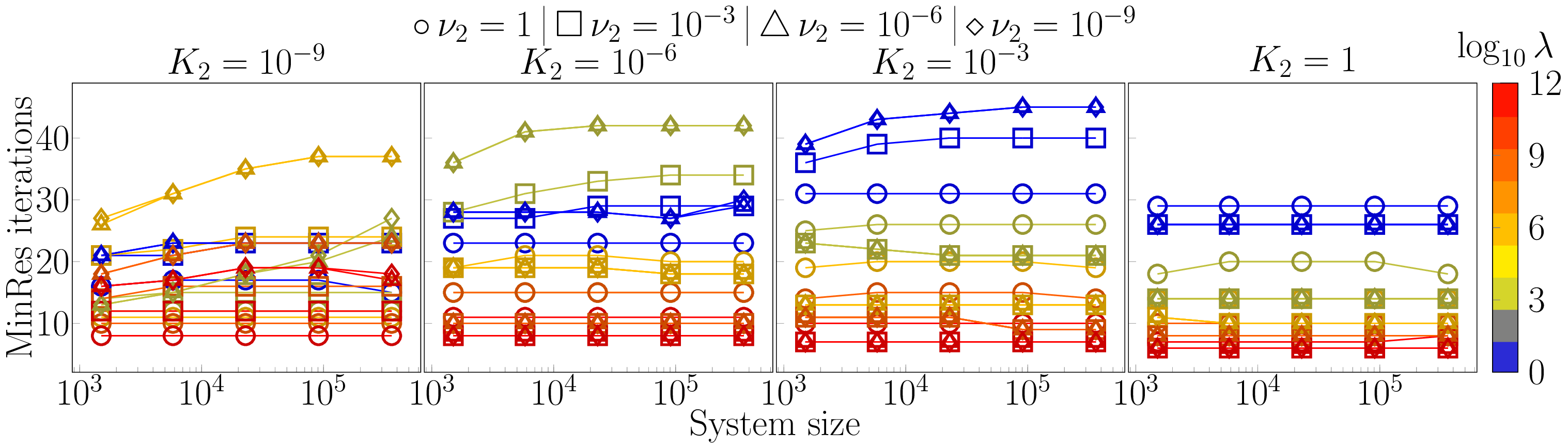}
  \vspace{-20pt}      
  \caption{
    Number of preconditioned MinRes iterations for 2-network Biot-Brinkman
    system with preconditioner \eqref{Preconditioner:B}.
    (Top) The displacement and flux blocks use realized by geometric multigrid while
    $\mathcal{B}_{\bm p}$ is computed by LU. (Bottom) Exact preconditioner
    is used. Transfer coefficient $\beta=10^{-6}$, while $c_2=0$, $\alpha_2=1$ and the remaining
    problem parameters are set to 1.}
  \label{fig:flux_brinkman_beta1E-6}
\end{figure}

We finally compare the cost of the exact and inexact Biot-Brinkman preconditioners
for case $K_2=10^{-3}$, $\lambda=1$, $\beta=10^{-6}$ which required most iterations
in the previous experiments, cf. \cref{fig:flux_brinkman_beta1E-6}. Our results
are summarized in \cref{tab:mg_vs_lu}. We observe that despite requiring more
iterations for convergence the solution time\footnote{The comparison is done in terms
  of the aggregate of the setup time of the preconditioner and the run time of the Krylov solver.
} with the multigrid-based preconditioner is noticeably faster. In addition,
the resulting solution algorithm appears to scale linearly in the number of
unknowns. We remark that for the sake of simple comparison the computations
were done in serial using single-threaded execution. However, the latter setting
is particularly unfavorable for the exact preconditioner $\mathcal{B}$ as
modern LU solvers are known for their thread efficiency.

\begin{table}
  \centering
  \footnotesize{
  \begin{tabular}{c|cccc||cccc}
    \hline
     & \multicolumn{4}{c||}{MinRes iterations LU} & \multicolumn{4}{c}{MinRes iterations MG}\\
    \hline
\backslashbox{$\nu_2$}{$h$} & $2^{-3}$ & $2^{-4}$ & $2^{-5}$ & $2^{-6}$ & $2^{-3}$ & $2^{-4}$ & $2^{-5}$ & $2^{-6}$\\
    \hline
$10^{-9}$  & 43 & 44 & 45 & 45  & 46 & 48 & 50 & 49\\
$10^{-6}$  & 43 & 44 & 45 & 45  & 46 & 48 & 50 & 49\\
$10^{-3}$  & 39 & 40 & 40 & 40  & 45 & 48 & 51 & 51\\
$1$       & 31 & 31 & 31 & 31  & 44 & 45 & 46 & 46\\
    \hline
    \hline
     & \multicolumn{4}{c||}{Solve time LU [s]} & \multicolumn{4}{c}{Solve time MG [s]}\\
    \hline
\backslashbox{$\nu_2$}{$h$} & $2^{-3}$ & $2^{-4}$ & $2^{-5}$ & $2^{-6}$ & $2^{-3}$ & $2^{-4}$ & $2^{-5}$ & $2^{-6}$\\
    \hline
$10^{-9}$  & 2.50 & 4.64 & 23.18 & 181.00 & 4.47 & 7.05 & 18.22 & 64.29\\  
$10^{-6}$  & 2.51 & 4.65 & 23.12 & 180.36 & 4.59 & 7.15 & 18.21 & 64.47\\  
$10^{-3}$  & 2.50 & 4.64 & 23.06 & 180.34 & 4.57 & 7.05 & 18.24 & 65.45\\  
$1$       & 2.51 & 4.57 & 22.74 & 178.84 & 4.45 & 6.94 & 17.63 & 62.83\\  
    \hline
  \end{tabular}
  }
  \vspace{-5pt}    
  \caption{
    Performance of exact (LU) and approximate multigrid-based (MG) preconditioners for the two-network
    generalized Biot-Brinkman model. Parameter $\nu_2$ is varied while
    $c_2=0$, $K_2=10^{-3}$, $\beta=10^{-6}$ and the remaining parameters are set to 1.
    Number of unknowns in the systems ranges from $6\times 10^3$ to $362\times 10^3$.
    Solve time aggregates setup time of the preconditioner and the run time of the Krylov solver.
    Computations were done in serial with threading disabled by setting \texttt{OMP\_NUM\_THREADS=1}.
  }
  \label{tab:mg_vs_lu}
\end{table}


\bibliographystyle{siamplain}
\bibliography{reference_mpet}
\appendix
\section{Components of multigrid preconditioner}\label{sec:mg_components}
In this section we report numerical experiments demonstrating robustness
of geometric multigrid preconditioners for blocks $\mathcal{B}_{\bm u}$ and
$\mathcal{B}_{\bm v}$ of the Biot-Brinkman preconditioner \eqref{Preconditioner:B}. Adapting the unit square geometry
and the setup of boundary conditions from \Cref{sec:mg} we investigate performance
of the preconditioners by considering boundedness of the (preconditioned) conjugate gradient (CG) 
iterations. In the following, the initial vector is set to 0 and the convergence of the CG
solver is determined by reduction of the preconditioned residual norm by a factor $10^{8}$.
Finally, both systems are discretized by  $\text{BDM}_1$ elements.

\cref{tab:elasticity_mg} confirms robustness of the $F(2, 2)$-cycle for
the displacement block of \eqref{Preconditioner:B}. In particular, the
iterations can be seen to be bounded in mesh size and the Lam{\' e} parameter $\lambda$.

\begin{table}
  \centering
  \scriptsize{
    \begin{tabular}{c|cccccc}
      \hline
      \multirow{2}{*}{$\lambda$} & \multicolumn{6}{c}{$\log_2h$}\\
      \cline{2-7}
      & ${-3}$ & ${-4}$ & ${-5}$ & ${-6}$ & ${-7}$ & ${-8}$ \\
      \hline
      1        & 10 & 10 & 9 & 9 & 9 & 9\\
      $10^{3}$  & 14 & 14 & 13 & 13 & 12 & 12\\
      $10^{6}$  & 14 & 14 & 13 & 13 & 13 & 12\\
      $10^{9}$  & 14 & 14 & 14 & 13 & 13 & 13\\
      $10^{12}$ & 14 & 15 & 14 & 14 & 15 & 16\\
      \hline
    \end{tabular}
  }
  \vspace{-5pt}  
  \caption{
    Number of preconditioned conjugate gradient iterations for approximating 
    the displacement block $\mathcal{B}_{\bm u}$ of the Biot-Brinkman preconditioner.
    Geometric multigrid preconditioner uses $F(2, 2)$-cycle with 3 levels and a vertex-star (damped Richardson)
    smoother. In all experiments $\mu=1$.
  }
  \label{tab:elasticity_mg}
\end{table}

For the flux block $\mathcal{B}_{\bm v}$ we limit the investigations to the
two-network case and set $c_2=0$, $\alpha_2=1$ as these parameter values yielded
the stiffest problems (in terms of their condition numbers) in the robustness study
of \Cref{sec:sensitivity}. Performance of the geometric multigrid 
preconditioner using a $W(2, 2)$-cycle with vertex-star smoother is then summarized
in \Cref{fig:flux_mg}. We observe that the number of CG iterations is bounded
in the mesh size and variations in $K_2$, $\nu_2$ and the exchange coefficient $\beta$.

\begin{figure}
  \centering
  \includegraphics[width=\textwidth]{./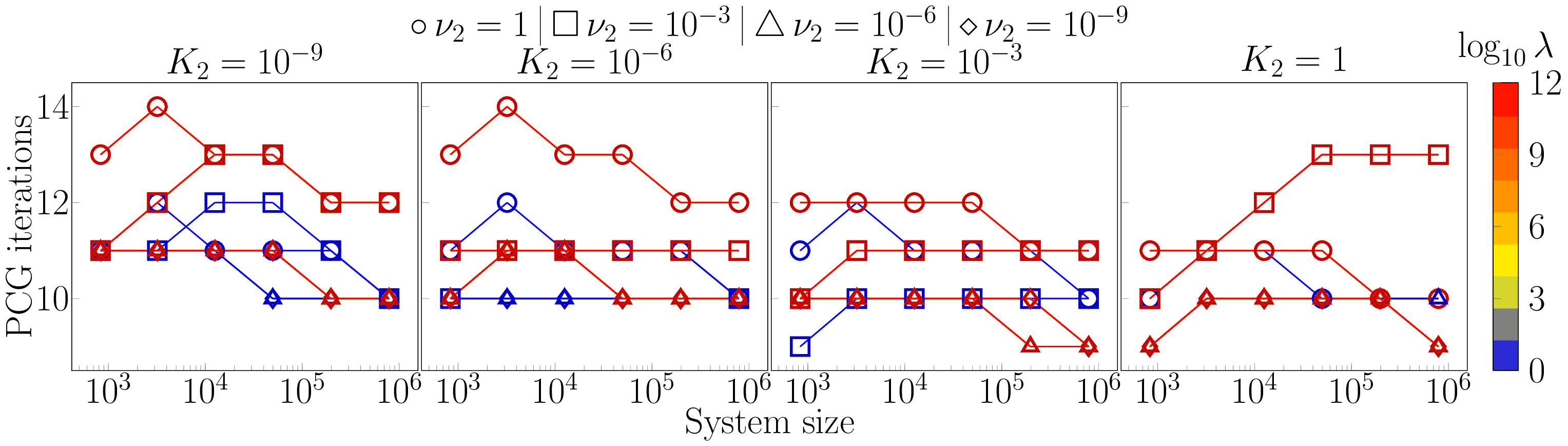}\\
  \includegraphics[width=\textwidth]{./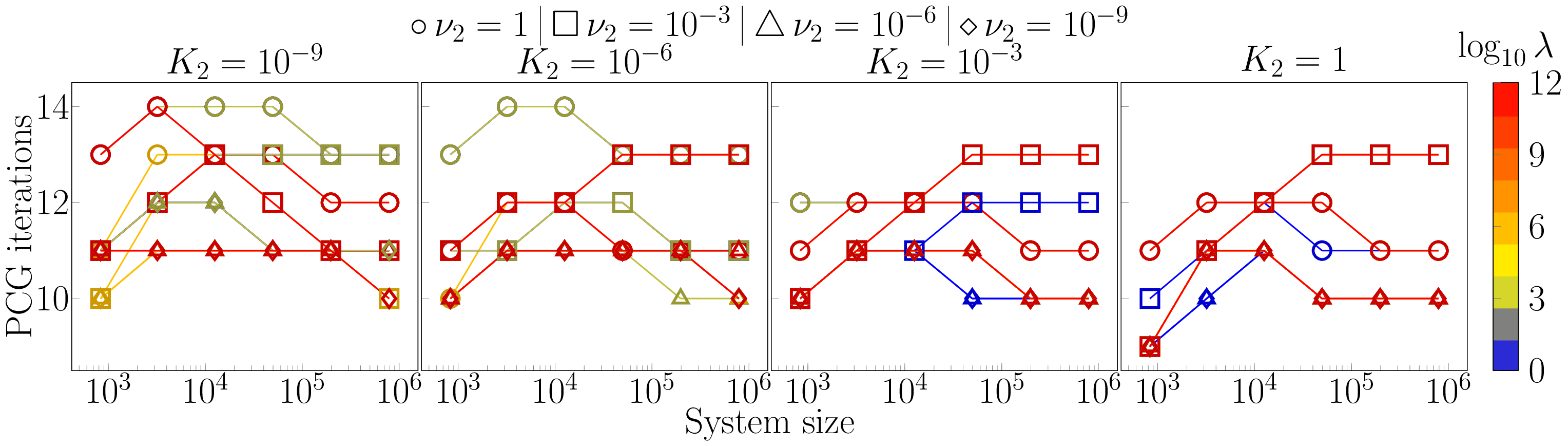}
  \vspace{-20pt}
  \caption{Number of preconditioned conjugate gradient iterations for approximating
    the flux block $\mathcal{B}_{\bm v}$ of the Biot-Brinkman preconditioner.
    The preconditioner uses $W(2, 2)$-cycle of geometric multigrid with vertex-star (damped Richardson)
    smoother and 3 grid levels. (Top) Transfer coefficient $\beta=10^{6}$,
    (bottom) $\beta=10^{-6}$. Values of $K_2$, $\nu_2$ (encoded by markers)
    and $\lambda$ (encoded by line color) are varied. In both setups $c_2=0$, $\alpha_2=1$ and the remaining
    problem parameters are set to 1.
  }
  \label{fig:flux_mg}
\end{figure}

\end{document}